\documentclass[12pt]{amsart}
\usepackage[latin1]{inputenc}
\usepackage{indentfirst}
\usepackage{amsfonts}
\usepackage{graphicx}
\usepackage{amsmath}
\usepackage{amssymb}
\usepackage{latexsym}
\usepackage{amscd}
\usepackage[all]{xy}
\usepackage{color}
\usepackage{tikz}
\usepackage{breqn}
\usetikzlibrary{shapes.geometric}
\usepackage{algorithm}
\usepackage{algorithmic}
\usepackage{enumerate}

\setlength{\textwidth}{15cm} \setlength{\textheight}{22cm}
\setlength{\topmargin}{-1cm} \setlength{\oddsidemargin}{1cm}
\setlength{\evensidemargin}{1cm}
    %DIST\^{A}NCIA ENTRE LINHAS

\setcounter{page}{1}

\def\NN{\mathbb{N}}
\def\ZZ{\mathbb{Z}} %adicionado por Matheus em 28/04
\def\negh{\mathbf h}

\def\negs{\mathbf s}
\def\nega{\mathbf a}

\def\negx{\mathbf x}
\def\negy{\mathbf y}
\def\negz{\mathbf z}
\def\negw{\mathbf w}
\def\nege{\mathbf e}
\def\negc{\mathbf c}

\def\negalpha{\text{\boldmath$\alpha$}}

\def\neg1{\text{\boldmath$1$}}
\def\negbeta{\text{\boldmath$\beta$}}
\def\neggamma{\text{\boldmath$\gamma$}}

\def\neggamma{\text{\boldmath$\gamma$}}

\def\neg1{\text{\boldmath$1$}}

\DeclareMathOperator{\lub}{lub}

\newtheorem{theorem}{Theorem}[section]
\newtheorem{definition}[theorem]{Definition}
\newtheorem{lemma}[theorem]{Lemma}
\newtheorem{corollary}[theorem]{Corollary}
\newtheorem{proposition}[theorem]{Proposition}
\newtheorem{remark}[theorem]{Remark}
\newtheorem{example}[theorem]{Example}

\title[The corner element of generalized numerical semigroups]{The corner element of \\ generalized numerical semigroups}

\author[M. Bernardini]{Matheus Bernardini}
\address{}
\email{matheusbernardini@unb.br}

\author[W. Tenório]{Wanderson Tenório}
\address{}
\email{wanderson\_tenorio@ufg.br}

\author[G. Tizziotti]{Guilherme Tizziotti}
\address{}
\email{guilhermect@ufu.br}

\thanks{{\em 2020 Math. Subj. Class.}: Primary 20M14; Secondary 05A15, 05A19}

\thanks{{\em Keywords}: generalized numerical semigroup, genus, corner}

\begin{document}

\maketitle

\begin{abstract}
	In this paper we introduce the concept of corner element of a generalized numerical semigroup, which extends in a sense the idea of conductor of a numerical semigroup to generalized numerical semigroups in higher dimensions. We present properties of this new notion and its relations with existing invariants in the literature, and provide an algorithm to compute all the generalized numerical semigroups with fixed corner. Besides that, we provide lower and upper bounds on the number of generalized numerical semigroups having a fixed corner element.
\end{abstract}

\section{Introduction}

Let $\mathbb{N}$ be the set of the positive integers and $\mathbb{N}_0 = \mathbb{N} \cup \{0\}$. A \textit{generalized numerical semigroup} (GNS) is a submonoid $S \subseteq \mathbb{N}_0^{d}$, where $d$ is a positive integer, such that its complement $\operatorname{H}(S)=\mathbb{N}_0^{d} \setminus S$ is finite. The elements of $\operatorname{H}(S)$ are called the \textit{gaps} (or \textit{holes}) of $S$ and its cardinality $\operatorname{g}(S)=|\operatorname{H}(S)|$ is the so-called \textit{genus} of $S$. Generalized numerical semigroups arise as a natural generalization to higher dimensions of the notion of numerical semigroup (case $d=1$), which is an active topic of research with many challenging open problems. For a detailed overview and compilation of the several ways of development on numerical semigroups, we refer the reader to \cite{GS-R, K}.
%\textcolor{red}{Semigrupos numéricos é uma área ativa de pesquisa em que vários problemas têm surgido nos últimos tempos, por exemplo as conjecturas de Bras-Amorós e de Wilf (veja o livro \cite{GS-R} e o trabalho \cite{K} para um panorama dos problemas).}

%Generalized numerical semigroups were studied by Failla, Peterson, and Utano, in \cite{failla}, as a natural generalization of the well known concept of numerical semigroup (case $d=1$). In such paper, the authors computed the number of GNSs $S \subset \mathbb{N}_0^{d}$ of genus $g$ for small values of $g$ and $d$, and provided a certain asymptotic bounds for large values of $g$ and $d$.
Generalized numerical semigroups were introduced by Failla, Peterson, and Utano in \cite{failla}, where they computed the number of GNSs $S \subseteq \mathbb{N}_0^{d}$ of genus $g$ for small values of $g$ and $d$ and provided certain asymptotic bounds for large values of $g$ and $d$. Since their work, several papers on GNSs, as well as on a wider class of submonoids in $\mathbb{N}_0^d$, have appeared in the literature proposing to formulate definitions, properties, results, and open problems of numerical semigroups to the general higher dimensional setting. For instance, in \cite{irreducible}, Cisto, Failla, Peterson, and Utano investigated the property of irreducibility in GNSs, introducing also the notion of Frobenius GNSs and allowable gaps. A new family of Frobenius GNSs, extending the irreducible ones, was proposed by Cisto and Ten\'orio in \cite{CT} with the study of the property of almost-symmetry for GNSs. Singhal and Lin \cite{SL} characterized the allowable gaps in GNSs and provided estimates on the number of Frobenius GNSs with a given Frobenius number. In \cite{CDGS}, Cisto, Delgado, and Garc\'ia-S\'anchez provided algorithms to perform calculations on GNS and to compute the set of all GNS with a prescribed genus. 
Pseudo-Frobenius elements of a special class of submonoids in $\mathbb{N}_0^d$ that includes the GNSs were studied by Garc\'ia-Garc\'ia, Ojeda, Rosales, and Vigneron-Tenorio in \cite{GG}. Generalizations of the Wilf's conjecture were proposed in \cite{cisto, Wilf}. An extension of proportionally modular numerical semigroups to higher dimensions is investigated in \cite{modular, modular0}.

In this work, we introduce the concept of \textit{corner} of a GNS (see Definition \ref{corner}), which somehow generalizes the notion of conductor of a numerical semigroup. We explore the properties of this new concept and its relationships with the genus and other invariants in the literature on GNS, motivated by well known relations in numerical semigroups. Besides that, using the notion of tree of GNS, we present an algorithm to compute all the GNSs with fixed corner and we provide lower and upper bounds on the number of GNSs with a fixed corner.
%, in a similar fashion to the approach of \cite{backelin}. %\textcolor{red}{Motivados pelas relações em semigrupos numéricos, estudamos a relação do corner com gênero e a quantidade gnss de corner fixado, sendo este último no espírito \cite{backelin}. }

This paper is organized as follows. In Section 2 we present some useful definitions and notations for the rest of the paper. The concept of corner of a GNS is introduced in Section 3, where we also present properties of this concept. The relation between the genus and the corner of a GNS is studied in Section 4. In Section 5 we give an algorithm to compute all the GNSs with fixed corner. We complete this work in Section 6 by providing lower and upper bounds on the number of GNSs having fixed corner.

\section{Preliminaries and notations}
 
Throughout this paper, we use the following notations. For integers $a$ and $b$, we denote $[a] := \{x \in \ZZ: 1 \leq x \leq a\}$ and $[a,b] := \{x \in \ZZ: a \leq x \leq b\}$. For a real number $x$, $\lceil x \rceil$ stands for the smallest integer greater than or equal to $x$ and $\lfloor x\rfloor$ stands for the biggest integer smaller than or equal to $x$.

For an element $\negalpha \in \mathbb{N}_0^{d}$, the coordinates of $\negalpha$ will be denoted by $\negalpha = (\alpha_1,\ldots,\alpha_d)$ and the product of the coordinates of $\negalpha$ by the symbol $|\negalpha|$. The all zero $d$-tuple $(0, \ldots, 0)$ will be denoted simply by $\mathbf{0}$. The natural partial order $\leq$ in $\mathbb{N}_0^d$ is defined as follows: for $\negalpha, \negbeta  \in \mathbb{N}_0^{d}$, we have
$$\negalpha \leq \negbeta \ \mbox{if and only if} \ \alpha_i \leq \beta_i \  \mbox{for all} \ i \in [d].$$
For $\negalpha \in \mathbb{N}_0^d$, we consider the set $\operatorname{C}(\negalpha) := \{ \nega \in \mathbb{N}_0^{d}  \mbox{ : } \nega \leq \negalpha \}.$
Given a finite nonempty set $\mathcal{B}\subseteq \mathbb{N}_0^d$, the \emph{least upper bound} ($\lub$) of $\mathcal{B}$ is the element of $\mathbb{N}_0^d$ defined by $$\mbox{lub}(\mathcal{B}):=(\max\{\beta_1\mbox{ : } \negbeta \in \mathcal{B} \},\ldots,\max\{\beta_d\mbox{ : } \negbeta \in \mathcal{B} \}).$$

A monomial order $\prec$ is a total order in $\mathbb{N}_0^d$ that satisfies the following conditions:
\begin{itemize}
	\item for $\negalpha, \negbeta \in \mathbb{N}_0^d$, if $\negalpha \prec \negbeta$, then $\negalpha + \neggamma \prec \negbeta + \neggamma$ for all $\neggamma \in \mathbb{N}_0^d$; and
	\item  for $\negalpha \in \mathbb{N}_0^d$, if $\negalpha\neq \mathbf{0}$, we have $\mathbf{0}\prec \negalpha$.
\end{itemize}
Monomial orders extend the natural partial order $\leq$ in $\mathbb{N}_0^d$ (see \cite[Proposition~4.4]{irreducible}).

Given $S\subseteq \mathbb{N}_0^d$ a GNS, we consider the partial order $\leq_S$ in $\mathbb{N}_0^d$ defined by:
$$\negalpha \leq_S \negbeta \ \mbox{if and only if} \ \negbeta-\negalpha \in S,$$
where $\negbeta-\negalpha$ stands for the usual difference. Writing $S^*=S\setminus \{\mathbf{0}\}$, the set of pseudo-Frobenius elements of $S$ is defined as $$\operatorname{PF}(S) := \{\negx \in \operatorname{H}(S) \ : \ \negx + S^* \subset S\}.$$
Its elements are exactly the maximal elements of $\operatorname{H}(S)$ with respect to the partial order $\leq_S$ (see \cite[Proposition 1.3]{irreducible}). The set of special gaps of $S$ is $$\operatorname{SG}(S) := \{\negx \in \operatorname{PF}(S) \ : \ 2\negx \in S\}.$$
When there is a unique maximal element in $\operatorname{H}(S)$ with respect to the natural partial order $\leq$ of $\mathbb{N}_0^d$, $S$ is said to be a Frobenius GNS. Otherwise, it is said to be a non-Frobenius GNS.

\section{The corner of a GNS}

In this section, we define the corner of a GNS, which plays an important role in this paper.

\begin{definition} \label{corner}
Let $S \subseteq \mathbb{N}_0^d$ be a GNS. An element $\negc = (c_1, \ldots, c_d) \in S$ is called a corner of $S$ if the following conditions are  are satisfied:

\begin{itemize}
  \item[(1)] for all $i \in [d]$ and $\alpha \geq c_i \in \mathbb{N}_0$ we have that \newline $(\beta_1, \ldots, \beta_{i-1}, \alpha , \beta_{i+1}, \ldots , \beta_d) \in S$ for any $\beta_1, \ldots, \beta_{i-1}, \beta_{i+1}, \ldots , \beta_d \in \mathbb{N}_0$;
  \item[(2)] for all $i \in [d]$, there are $\gamma_1, \ldots, \gamma_{i-1}, \gamma_{i+1}, \ldots , \gamma_d \in \mathbb{N}_0$ such that \linebreak $(\gamma_1, \ldots, \gamma_{i-1}, c_{i} - 1, \gamma_{i+1}, \ldots , \gamma_d) \notin S$.
\end{itemize}

\end{definition}

Notice that every GNS $S$ has a corner element since $\operatorname{H}(S)$ is finite. In particular, $\mathbf{0}$ is a corner of $\mathbb{N}_0^d$.

\begin{proposition} \label{unicidade corner}
Let $S \subseteq \mathbb{N}_0^d$ be a GNS. Then the corner of $S$ is unique.
\end{proposition}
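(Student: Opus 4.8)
The plan is to argue by contradiction, reducing the statement to a coordinatewise comparison. Suppose $\mathbf{c} = (c_1, \ldots, c_d)$ and $\mathbf{c}' = (c_1', \ldots, c_d')$ are both corners of $S$, and suppose they differ, say $c_i \neq c_i'$ for some index $i \in [d]$. After possibly interchanging the roles of $\mathbf{c}$ and $\mathbf{c}'$, I may assume $c_i < c_i'$. The goal is then to show that this strict inequality is incompatible with the defining conditions, which will force $c_i = c_i'$ for every $i$ and hence $\mathbf{c} = \mathbf{c}'$. The guiding observation is that conditions (1) and (2) together pin down each coordinate of a corner independently: $c_i$ is exactly one more than the largest value the $i$-th coordinate attains among the gaps of $S$.

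The core of the argument plays condition (2) for the ``larger'' corner against condition (1) for the ``smaller'' one. Since $c_i < c_i'$, we have $c_i' \geq 1$, so condition (2) applied to $\mathbf{c}'$ in coordinate $i$ produces $\gamma_1, \ldots, \gamma_{i-1}, \gamma_{i+1}, \ldots, \gamma_d \in \mathbb{N}_0$ such that the element
$$\mathbf{p} := (\gamma_1, \ldots, \gamma_{i-1}, c_i' - 1, \gamma_{i+1}, \ldots, \gamma_d)$$
lies in $\operatorname{H}(S)$, i.e. $\mathbf{p} \notin S$. On the other hand, the $i$-th coordinate of $\mathbf{p}$ equals $c_i' - 1 \geq c_i$, so condition (1) applied to $\mathbf{c}$ in coordinate $i$, with the remaining coordinates taken to be exactly these $\gamma_j$, asserts that $\mathbf{p} \in S$. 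This contradiction rules out $c_i < c_i'$; exchanging the two corners rules out $c_i > c_i'$ as well, so $c_i = c_i'$. As $i \in [d]$ is arbitrary, $\mathbf{c} = \mathbf{c}'$.

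I expect no real obstacle beyond one bookkeeping point: one must ensure that $c_i' - 1$ does not fall outside $\mathbb{N}_0$ before invoking condition (1) on $\mathbf{p}$. This is precisely why the reduction first records that $c_i < c_i'$ entails $c_i' \geq 1$, so that $c_i' - 1 \geq c_i \geq 0$ and $\mathbf{p}$ is a genuine element of $\mathbb{N}_0^d$. (In the extreme situation $S = \mathbb{N}_0^d$ one has $\operatorname{H}(S) = \emptyset$, and the same reasoning shows that every coordinate of any corner must equal $0$, again giving uniqueness.) Notably, the proof uses neither the monoid structure of $S$ nor the partial orders $\leq$, $\leq_S$, only the finiteness of $\operatorname{H}(S)$ that is implicit in $\mathbf{c}$ and $\mathbf{c}'$ being well-defined, together with the clash between conditions (1) and (2) described above.
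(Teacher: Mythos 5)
Your proof is correct and is essentially the same argument as the paper's: both play condition (2) of Definition \ref{corner} for the corner with the larger $i$-th coordinate against condition (1) for the corner with the smaller one, evaluated at the value one less than the larger coordinate, to produce an element that must simultaneously lie in $S$ and in $\operatorname{H}(S)$. The only difference is cosmetic (the paper writes the inequality as $c_i - 1 \geq c_i'$ and tests at $c_i - 1$, while you write $c_i < c_i'$ and test at $c_i' - 1$), and your extra remark that $c_i' - 1 \geq 0$ is a harmless bit of bookkeeping the paper leaves implicit.
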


\begin{proof}
Let $\negc = (c_1, \ldots , c_d)$ and $\negc'= (c_{1}' , \ldots , c_{d} ')$ be two corners of $S$ and suppose that $\negc \neq \negc'$. Hence, there is $i \in [d]$ such that $c_i \neq c_i'$ and we can assume, without loss of generality, that $c_i - 1 \geq c_i'$. Item (1) of the Definition \ref{corner} ensures that $(\beta_1, \ldots, \beta_{i-1}, c_i - 1  , \beta_{i+1}, \ldots , \beta_d) \in S$ for any $\beta_1, \ldots, \beta_{i-1}, \beta_{i+1}, \ldots , \beta_d \in \mathbb{N}_0$, because $\negc'$ is a corner of $S$. On the other hand, item (2) of the definition guarantees that there are $\gamma_1, \ldots, \gamma_{i-1}, \gamma_{i+1}, \ldots , \gamma_d \in \mathbb{N}_0$ such that $(\gamma_1, \ldots, \gamma_{i-1}, c_{i} - 1, \gamma_{i+1}, \ldots , \gamma_d) \notin S$, which leads to a contradiction.
\end{proof}

\begin{proposition} \label{remark corner 2}
Let $S \subseteq \mathbb{N}_0^d$ be a GNS with positive genus and corner $\negc = (c_1, \ldots, c_d)$. Then the following properties hold:
\begin{itemize}
  \item[i)] $c_i \neq 0$ for all $i \in [d]$;
  \item[ii)] there is $i \in [d]$ such that $c_i>1$.
\end{itemize}
\end{proposition}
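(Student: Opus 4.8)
The plan is to deduce both items directly from the two defining conditions of a corner in Definition \ref{corner}, used in tandem with the standing hypothesis that $S$ has positive genus, i.e. $\operatorname{H}(S)\neq\emptyset$. The only structural facts I need beyond the definition are that $\mathbf{0}\in S$ (since $S$ is a submonoid) and that condition (1) can be read contrapositively as a constraint on the coordinates of any gap.

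For item i), I would argue by contradiction. Suppose $c_i=0$ for some $i\in[d]$. Then condition (1), applied to this index, asserts that $(\beta_1,\ldots,\beta_{i-1},\alpha,\beta_{i+1},\ldots,\beta_d)\in S$ for every $\alpha\geq c_i=0$ and every choice of the remaining coordinates in $\mathbb{N}_0$. As $\alpha$ and the $\beta_j$ range over all of $\mathbb{N}_0$, these tuples exhaust $\mathbb{N}_0^d$; indeed, any $\negy=(y_1,\ldots,y_d)\in\mathbb{N}_0^d$ is obtained by taking $\alpha=y_i$ and $\beta_j=y_j$ for $j\neq i$. This forces $S=\mathbb{N}_0^d$, hence $\operatorname{g}(S)=0$, contradicting the assumption of positive genus. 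Thus $c_i\neq 0$ for all $i$.

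For item ii), the key observation I would isolate first is the contrapositive of condition (1): if $\negx\in\operatorname{H}(S)$, then $x_i<c_i$ for every $i\in[d]$, since any coordinate with $x_i\geq c_i$ would place $\negx$ in $S$ by condition (1). Now, because $S$ has positive genus, there exists a gap $\negx\in\operatorname{H}(S)$; and since $\mathbf{0}\in S$ we have $\negx\neq\mathbf{0}$, so $x_j\geq 1$ for some $j\in[d]$. Combining this with $x_j<c_j$ yields $c_j\geq x_j+1\geq 2>1$, which is exactly the desired conclusion.

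I do not expect a genuine obstacle: both parts reduce to reading the defining conditions correctly, so the difficulty is essentially bookkeeping rather than technique. The one point I would state carefully is that item ii) should be drawn from condition (1) in its contrapositive form together with $\mathbf{0}\in S$, rather than from condition (2); this keeps the argument clean and sidesteps any need to interpret the expression $c_i-1$ in the edge case $c_i=0$, which item i) has in any event just excluded.
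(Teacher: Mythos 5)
Your proof is correct and follows essentially the same route as the paper: item i) is argued identically, and item ii) is just the paper's argument in direct rather than contradiction form (the paper assumes all $c_i=1$ and derives $S=\mathbb{N}_0^d$; you observe that condition (1) forces every gap to satisfy $x_i<c_i$ and apply this to a nonzero gap). Both rest on the same reading of condition (1) together with $\operatorname{g}(S)>0$, so there is nothing to flag.
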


\begin{proof}
Suppose that $c_i = 0$ for some $i \in [d]$. From item (1) of Definition \ref{corner}, if $\beta_i\geq0$, then $(\beta_1, \ldots, \beta_{i-1},\beta_i,\beta_{i+1},\ldots, \beta_d) \in S$ for any $\beta_1, \ldots, \beta_{i-1}, \beta_{i+1}, \ldots , \beta_d \in \mathbb{N}_0$. Hence, $S= \mathbb{N}_0^d$ which is a contradiction. Now, suppose that $c_i = 1$ for all $i \in [d]$. Item (1) of the Definition \ref{corner} ensures that $\nege_i \in S$ for all $i\in [d]$ and, again, we conclude that $S = \mathbb{N}_0^d$, which is a contradiction.
\end{proof}

We recall that the conductor $c$ of a numerical semigroup $S$ is an element of $S$ such that $c + n \in S$, for all $n \in \NN_0$ and $c-1 \notin S$. In this way, the corner generalizes the concept of the conductor of a numerical semigroup. Indeed, for $d= 1$, the conductor and the corner are the same.

\begin{remark} \label{gaps}
Let $S \subseteq \mathbb{N}_0^d$ be a GNS with genus $g > 0$ and $\negc$ be the corner of $S$. By definition, we can conclude that $\operatorname{H}(S) \subseteq \operatorname{C}(\negc - \neg1)$, where $\neg1$ stands for the $d$-tuple $(1,\ldots, 1)$.  Moreover, the corner is the minimum element of the GNS (with respect to the partial order $\leq$) with this property, i.e., $\negc=\min_\leq \{\negx \in \NN_0^d \ : \  \operatorname{C}(\negx - \neg1)\supseteq\operatorname{H}(S)\}$.
\end{remark}

Next result relates the corner of a GNS with its set of gaps.

\begin{theorem}
Let $S \subseteq \mathbb{N}_0^{d}$ be a GNS with genus $g > 0$ and corner $\negc$. Then, 
$$\negc = \lub(\operatorname{H}(S))+\neg1.$$
In particular, $S$ is a Frobenius GNS if and only if $\negc-\mathbf{1}\in \operatorname{H}(S)$.
\label{lub}
\end{theorem}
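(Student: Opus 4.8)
The plan is to prove the two claims separately, leaning heavily on the characterization of the corner given in Remark~\ref{gaps}. Write $\neglambda := \lub(\operatorname{H}(S))$, so that $\neglambda = (\lambda_1, \ldots, \lambda_d)$ with $\lambda_i = \max\{\beta_i : \negbeta \in \operatorname{H}(S)\}$; since $g > 0$ the set $\operatorname{H}(S)$ is nonempty and $\neglambda$ is well defined. The goal of the first part is to show $\negc = \neglambda + \neg1$, and the natural route is to verify that $\neglambda + \neg1$ satisfies conditions (1) and (2) of Definition~\ref{corner} and then invoke uniqueness (Proposition~\ref{unicidade corner}). Alternatively, and perhaps more cleanly, I would use the minimality characterization from Remark~\ref{gaps}: the corner is $\min_\leq\{\negx \in \NN_0^d : \operatorname{C}(\negx - \neg1) \supseteq \operatorname{H}(S)\}$, so it suffices to show that $\neglambda + \neg1$ is exactly this minimum.

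For the inclusion $\operatorname{C}((\neglambda + \neg1) - \neg1) \supseteq \operatorname{H}(S)$, note that $(\neglambda + \neg1) - \neg1 = \neglambda$, and by the very definition of $\lub$ every gap $\negbeta \in \operatorname{H}(S)$ satisfies $\beta_i \leq \lambda_i$ for all $i$, hence $\negbeta \leq \neglambda$, i.e.\ $\negbeta \in \operatorname{C}(\neglambda)$. This shows $\neglambda + \neg1$ lies in the set whose minimum defines $\negc$, giving $\negc \leq \neglambda + \neg1$. For the reverse inequality I would argue coordinatewise: for each $i$ there exists a gap $\negbeta \in \operatorname{H}(S)$ with $\beta_i = \lambda_i$ (this is where the maximum in the $\lub$ is attained). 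If we had $c_i \leq \lambda_i$ for the corner $\negc$, then condition (1) of Definition~\ref{corner} applied with $\alpha = \lambda_i \geq c_i$ and the remaining coordinates $\beta_1, \ldots, \beta_{i-1}, \beta_{i+1}, \ldots, \beta_d$ taken from this gap would force $\negbeta \in S$, contradicting $\negbeta \in \operatorname{H}(S)$. Hence $c_i \geq \lambda_i + 1$ for every $i$, so $\negc \geq \neglambda + \neg1$, and combined with the previous inequality we obtain $\negc = \neglambda + \neg1$.

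For the second statement, recall that $S$ is Frobenius precisely when $\operatorname{H}(S)$ has a unique maximal element with respect to the natural partial order $\leq$. The plan is to show this is equivalent to $\negc - \neg1 = \neglambda \in \operatorname{H}(S)$. If $\neglambda \in \operatorname{H}(S)$, then since every gap is $\leq \neglambda$, the element $\neglambda$ is the unique maximum of $\operatorname{H}(S)$, so $S$ is Frobenius. Conversely, if $S$ is Frobenius with unique maximal gap $\negf$, then every gap satisfies $\negbeta \leq \negf$; in particular $\lambda_i = \max\{\beta_i\} \leq f_i$ for each $i$, so $\negf \leq \neglambda$, while $\negf \in \operatorname{H}(S)$ gives $\negf \leq \neglambda$ as well, and since $\negf$ is maximal and $\neglambda \geq \negf$ we get $\neglambda = \negf \in \operatorname{H}(S)$.

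The main obstacle I anticipate is the reverse inequality $\negc \geq \neglambda + \neg1$ in the first part, because it requires carefully extracting, for each coordinate $i$, a witnessing gap realizing the maximum $\lambda_i$ and then feeding exactly that gap's off-coordinates into condition~(1). One must be attentive that the maximum defining $\lambda_i$ may be attained by different gaps for different $i$, so the argument is genuinely coordinatewise rather than through a single element; once this is handled the rest reduces to unwinding the definitions of $\lub$ and of the natural partial order.
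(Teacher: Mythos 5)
Your proof is correct, and it reaches the conclusion by a slightly different route than the paper. The paper's proof is constructive: it defines the candidate $c_j = 1+\max\{\alpha_j^{(i)}\}$, checks that this element satisfies conditions (1) and (2) of Definition~\ref{corner}, and then (implicitly invoking Proposition~\ref{unicidade corner}) concludes it must be \emph{the} corner. You instead compare the given corner $\negc$ with $\neglambda+\neg1$ via two inequalities: $\negc\leq \neglambda+\neg1$ from the minimality characterization in Remark~\ref{gaps}, and $\negc\geq\neglambda+\neg1$ from a coordinatewise contradiction using condition (1) applied to a gap attaining $\lambda_i$. Both arguments ultimately rest on the same two facts -- condition (1) pushes every gap below $\negc-\neg1$, and condition (2) forces each value $c_i-1$ to be attained by some gap -- so neither buys more generality; yours trades the uniqueness proposition for antisymmetry of $\leq$, and it does lean on the minimality claim of Remark~\ref{gaps}, which the paper only asserts ``by definition'' (its justification in turn uses condition (2), so nothing circular, but it is worth being aware that you are importing that claim rather than the weaker inclusion $\operatorname{H}(S)\subseteq\operatorname{C}(\negc-\neg1)$ alone). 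A genuine plus of your write-up is that you actually prove the ``in particular'' Frobenius equivalence, which the paper's proof omits entirely; just note the typo in the converse direction, where ``so $\negf\leq\neglambda$'' should read ``so $\neglambda\leq\negf$'' (the two inequalities $\neglambda\leq\negf$ and $\negf\leq\neglambda$ together give $\neglambda=\negf\in\operatorname{H}(S)$, as you intend), and that ``every gap lies below the unique maximal element'' silently uses finiteness of $\operatorname{H}(S)$.
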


\begin{proof}
Let $\operatorname{H}(S)=\{\negalpha_1, \ldots , \negalpha_g\}$ be the set of gaps of $S$, where $\negalpha_i =(\alpha_{1}^{(i)}, \ldots , \alpha_{d}^{(i)})$ for each $i \in [g]$. For each $j \in [d]$, define $c_j= 1 + \max \{\alpha_{j}^{(i)} \mbox{ : } i \in [g]\}$ and $\negc = (c_1,\ldots,c_d)$. The definition of $\negc$ ensures that it lies on $S$. Now we prove that $\negc$ is the corner of $S$. If $\alpha \geq c_j$, then the definition of $c_j$ guarantees that $(\beta_1, \ldots, \beta_{i-1}, \alpha , \beta_{i+1}, \ldots , \beta_d) \in S$ for all $\beta_1, \ldots, \beta_{j-1}, \beta_{j+1}, \ldots , \beta_d \in \mathbb{N}_0$ and the condition (1) in Definiton \ref{corner} is verified. Since $c_j - 1 = \max \{\alpha_{j}^{(i)} \mbox{ : } i \in [g]\}$, there is $k \in [1,g]$ such that the $j$-th coordinate of $\negalpha_{k}$ is $c_j - 1$. Hence, the condition (2) in Definiton \ref{corner} is satisfied. Therefore, $\negc$ is the corner of $S$.
\end{proof}

In particular, one can also relate the corner of a GNS with its pseudo-Frobenius elements.

\begin{corollary}	\label{pseudo}
	Let $S\subseteq \mathbb{N}_0^d$ be a GNS with genus $g > 0$ and corner $\negc$. Then
	$$\negc=\operatorname{lub}(\operatorname{PF}(S))+\mathbf{1}.$$

\end{corollary}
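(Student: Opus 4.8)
The plan is to reduce the statement to Theorem \ref{lub} by proving that $\operatorname{lub}(\operatorname{PF}(S)) = \operatorname{lub}(\operatorname{H}(S))$; the corollary then follows at once, since Theorem \ref{lub} already gives $\negc = \operatorname{lub}(\operatorname{H}(S)) + \mathbf{1}$. Both $\operatorname{PF}(S)$ and $\operatorname{H}(S)$ are finite and nonempty (the latter because $g>0$, and the former because a finite poset has maximal elements), so their least upper bounds are well defined. As $\operatorname{PF}(S) \subseteq \operatorname{H}(S)$, the inequality $\operatorname{lub}(\operatorname{PF}(S)) \leq \operatorname{lub}(\operatorname{H}(S))$ is immediate from the definition of $\operatorname{lub}$, and the whole content lies in the reverse inequality, which I would establish one coordinate at a time.

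The main tool is the description recalled in Section 2: the elements of $\operatorname{PF}(S)$ are exactly the maximal elements of $\operatorname{H}(S)$ with respect to $\leq_S$. I would pair this with the elementary observation that $\leq_S$ refines the natural partial order $\leq$: if $\negalpha \leq_S \negx$, then $\negx - \negalpha \in S \subseteq \mathbb{N}_0^d$, whence $\negalpha \leq \negx$ and in particular $\alpha_j \leq x_j$ for every coordinate $j$. Now fix $j \in [d]$ and choose a gap $\negalpha \in \operatorname{H}(S)$ attaining the maximal $j$-th coordinate among all gaps. Since $\operatorname{H}(S)$ is finite, the poset $(\operatorname{H}(S), \leq_S)$ contains a maximal element $\negx$ with $\negalpha \leq_S \negx$, and such an $\negx$ belongs to $\operatorname{PF}(S)$.

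It then remains to compare coordinates. From $\negalpha \leq_S \negx$ I obtain $\alpha_j \leq x_j$, while $\negx \in \operatorname{PF}(S) \subseteq \operatorname{H}(S)$ forces $x_j \leq \max\{\beta_j : \negbeta \in \operatorname{H}(S)\} = \alpha_j$. Hence $x_j = \max\{\beta_j : \negbeta \in \operatorname{H}(S)\}$, so the $j$-th coordinates of $\operatorname{lub}(\operatorname{PF}(S))$ and $\operatorname{lub}(\operatorname{H}(S))$ coincide. Since $j$ was arbitrary, this gives $\operatorname{lub}(\operatorname{PF}(S)) = \operatorname{lub}(\operatorname{H}(S))$, and substituting into Theorem \ref{lub} completes the argument.

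The step I expect to require the most care is the claim that every gap lies $\leq_S$-below some pseudo-Frobenius element: it rests on the finiteness of $\operatorname{H}(S)$ to guarantee that $\leq_S$ admits a maximal element above any prescribed gap, combined with the identification of those maximal elements with $\operatorname{PF}(S)$. Once this is secured, the coordinatewise comparison is routine precisely because $\leq_S$ implies $\leq$, so no further estimates are needed.
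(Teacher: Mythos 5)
Your proof is correct and takes essentially the same route as the paper: both reduce the statement to showing $\operatorname{lub}(\operatorname{PF}(S))=\operatorname{lub}(\operatorname{H}(S))$, getting one inequality from $\operatorname{PF}(S)\subseteq\operatorname{H}(S)$ and the other from the fact that every gap lies $\leq_S$-below a pseudo-Frobenius element together with the observation that $\leq_S$ implies $\leq$. Your coordinate-by-coordinate phrasing is just a slightly more explicit rendering of the paper's one-line conclusion.
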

\begin{proof} It suffices to prove that $\operatorname{lub}(\operatorname{PF}(S))=\operatorname{lub}(\operatorname{H}(S))$. As $\operatorname{PF}(S)\subseteq \operatorname{H}(S)$, we have $\operatorname{lub}(\operatorname{PF}(S))\leq \operatorname{lub}(\operatorname{H}(S))$. On the other hand, since $\operatorname{PF}(S)$ are the maximal elements in $\operatorname{H}(S)$ with respect to $\leq_S$, for any $\negh\in \operatorname{H}(S)$ there exists $\negh'\in \operatorname{PF}(S)$ such that $\negh\leq_S \negh'$. In particular, for any $\negh\in \operatorname{H}(S)$ there exists $\negh'\in \operatorname{PF}(S)$ satisfying $\negh\leq \negh'$. Hence, $\operatorname{lub}(\operatorname{H}(S))\leq\operatorname{lub}(\operatorname{PF}(S))$.
\end{proof}

\section{The relation between the genus and the corner of a GNS}

Motivated by the relation between the genus $g$ and the conductor $c$ of a numerical semigroup by the following formula $g+1 \leq c \leq 2g$ (see \cite{GS-R}), we investigate relations between the genus of a GNS and the coordinates of its corner.

\begin{proposition} \label{cotainfgc}
Let $S \subset \NN_0^d$ be a GNS with genus $g > 0$ and corner $\negc = (c_1, \ldots, c_d)$. Then 
$$
g + 1 \leq \prod_{i=1}^{d} c_i .
$$
\end{proposition}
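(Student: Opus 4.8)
The plan is to bound the genus by a lattice-point count inside the box $\operatorname{C}(\negc - \neg1)$, and then to sharpen that count by one using the fact that $\mathbf{0}$ always belongs to $S$.

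First I would invoke Remark \ref{gaps}, which already tells us that $\operatorname{H}(S) \subseteq \operatorname{C}(\negc - \neg1)$. Since an element of $\operatorname{C}(\negc - \neg1)$ is precisely a tuple whose $i$-th coordinate ranges over $\{0, 1, \ldots, c_i - 1\}$, the box contains exactly $\prod_{i=1}^d c_i$ points. This immediately yields the weak bound $g = |\operatorname{H}(S)| \leq \prod_{i=1}^d c_i$, so the whole content of the statement is the extra $+1$.

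To upgrade this to the asserted strict form $g + 1 \leq \prod_{i=1}^d c_i$, I would exhibit one point of the box that fails to be a gap. By Proposition \ref{remark corner 2}(i) we have $c_i \geq 1$ for every $i \in [d]$, hence $\negc - \neg1 \in \mathbb{N}_0^d$ and in particular $\mathbf{0} \leq \negc - \neg1$, so $\mathbf{0} \in \operatorname{C}(\negc - \neg1)$. On the other hand $\mathbf{0} \in S$, because $S$ is a submonoid, and therefore $\mathbf{0} \notin \operatorname{H}(S)$. Thus the inclusion $\operatorname{H}(S) \subseteq \operatorname{C}(\negc - \neg1)$ is proper, and comparing cardinalities gives $g \leq \prod_{i=1}^d c_i - 1$, which is exactly the claim.

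I do not expect any genuine obstacle here: the argument is a counting bound plus a strictness improvement. The only point that requires care is securing the strict inequality through a concrete non-gap lattice point of $\operatorname{C}(\negc - \neg1)$, and $\mathbf{0}$ does the job as soon as $\negc - \neg1$ lies in $\mathbb{N}_0^d$, which is precisely what Proposition \ref{remark corner 2}(i) guarantees for a GNS of positive genus.
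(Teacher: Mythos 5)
Your proof is correct and follows exactly the paper's own argument: bound $\operatorname{H}(S)$ by the box $\operatorname{C}(\negc-\neg1)$ via Remark \ref{gaps}, then subtract one because $\mathbf{0}\in S$. Your extra remark that Proposition \ref{remark corner 2}(i) guarantees $\mathbf{0}\in\operatorname{C}(\negc-\neg1)$ is a small but valid piece of bookkeeping the paper leaves implicit.
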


\begin{proof}
Using Remark \ref{gaps}, we conclude that $\operatorname{H}(S) \subseteq \operatorname{C}(\negc - \neg1)$. Since $(0,0,\ldots,0) \in S$, then $|\operatorname{H}(S)| \leq \left(\prod_{i=1}^d c_i \right) - 1$ and the result follows.
\end{proof}

In \cite{cisto}, the authors introduced the concept of GNS ordinary semigroup as follows.

\begin{definition} \label{GNS ordinary}
A GNS $S \subset \mathbb{N}_0^d$ is called ordinary if there is some $\negs \in \mathbb{N}_0^d$ such that $S = \{ \mathbf{0} \} \cup (\mathbb{N}_0^d \setminus \operatorname{C}(\negs))$.
\end{definition}

Next, we show that those GNS are the unique that reach the bound presented in Proposition \ref{cotainfgc}.

\begin{lemma} \label{lemma corner ordinary}
Let $S = \{ \mathbf{0} \} \cup (\mathbb{N}_0^d \setminus \operatorname{C}(\negs)) \subset \mathbb{N}_0^d$ be an ordinary GNS. Then the corner of $S$ is $\negc = \negs + \neg1$.
\end{lemma}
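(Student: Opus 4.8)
The plan is to reduce everything to Theorem \ref{lub}, which already expresses the corner as $\lub(\operatorname{H}(S)) + \neg1$. First I would identify the gaps explicitly. Since $S = \{\mathbf{0}\} \cup (\mathbb{N}_0^d \setminus \operatorname{C}(\negs))$, an element lies in the complement of $S$ exactly when it is nonzero and belongs to $\operatorname{C}(\negs)$, so $\operatorname{H}(S) = \operatorname{C}(\negs) \setminus \{\mathbf{0}\}$. I would also record that the hypothesis $g > 0$ forces $\negs \neq \mathbf{0}$ (otherwise $\operatorname{C}(\negs) = \{\mathbf{0}\}$ and $S = \mathbb{N}_0^d$ has genus $0$), which guarantees $\operatorname{H}(S) \neq \emptyset$ and makes Theorem \ref{lub} applicable.

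Next I would compute $\lub(\operatorname{H}(S))$ coordinate by coordinate. For each $j \in [d]$, the $j$-th entry of the $\lub$ is $\max\{\beta_j : \negbeta \in \operatorname{C}(\negs) \setminus \{\mathbf{0}\}\}$. Every $\negbeta \leq \negs$ satisfies $\beta_j \leq s_j$, so this maximum is at most $s_j$; to get equality I would exhibit a nonzero element of $\operatorname{C}(\negs)$ whose $j$-th coordinate equals $s_j$. This shows $\lub(\operatorname{H}(S)) = \negs$, and substituting into Theorem \ref{lub} yields $\negc = \negs + \neg1$.

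The only delicate point is producing that witness when $s_j = 0$: the natural candidate $s_j \nege_j$ collapses to $\mathbf{0}$, which has been removed from $\operatorname{H}(S)$. I would resolve this using $\negs \neq \mathbf{0}$: choosing any coordinate $k$ with $s_k > 0$, the tuple $s_k \nege_k$ is a nonzero element of $\operatorname{C}(\negs) \setminus \{\mathbf{0}\}$ whose $j$-th coordinate is $0 = s_j$ (as $k \neq j$). For coordinates $j$ with $s_j > 0$ the witness $s_j \nege_j$ works directly. This small case split is the main (minor) obstacle; everything else is a direct translation through Theorem \ref{lub}.

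As a self-contained alternative I could verify Definition \ref{corner} directly for $\negc = \negs + \neg1$: condition (1) holds because any tuple with $i$-th coordinate at least $s_i + 1$ fails to lie in $\operatorname{C}(\negs)$ and hence belongs to $S$, while condition (2) asks for a gap whose $i$-th coordinate is $c_i - 1 = s_i$, which again needs the same $s_i = 0$ workaround. I would nonetheless prefer the Theorem \ref{lub} route as the cleaner argument.
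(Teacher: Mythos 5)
Your proof is correct, but it takes a different route from the paper's. The paper verifies the two conditions of Definition~\ref{corner} directly for $\negc=\negs+\neg1$ --- essentially your ``self-contained alternative'': any tuple whose $i$-th coordinate is at least $s_i+1$ lies outside $\operatorname{C}(\negs)$ and hence in $S$, and $\negs$ itself is a gap whose $i$-th coordinate equals $(s_i+1)-1$ for every $i$ simultaneously. Your preferred route through Theorem~\ref{lub} is equally legitimate, since that theorem precedes this lemma, and your identification $\operatorname{H}(S)=\operatorname{C}(\negs)\setminus\{\mathbf{0}\}$ together with the computation $\lub(\operatorname{H}(S))=\negs$ is correct; it has the mild advantage of making the gap set explicit, which the paper recomputes anyway in the proof of Proposition~\ref{prop produto corner ordinary}. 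Two small remarks. First, your case split on whether $s_j=0$ is unnecessary: the single element $\negs\in\operatorname{H}(S)$ (nonzero once $\negs\neq\mathbf{0}$) realizes the maximum in every coordinate at once, which is exactly the witness the paper uses for condition (2). Second, you are more careful than the paper about the degenerate case $\negs=\mathbf{0}$, where $S=\mathbb{N}_0^d$ and the claimed corner $\neg1$ fails (the corner of $\mathbb{N}_0^d$ is $\mathbf{0}$); the paper's proof silently assumes $\negs\neq\mathbf{0}$, so flagging that hypothesis is a genuine, if minor, improvement.
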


\begin{proof}
Let $\negs = (s_1, \ldots , s_d)$. If $i \in [d]$ and $\alpha \geq s_i + 1$ is an integer, then \linebreak $(\beta_1, \ldots, \beta_{i-1}, \alpha , \beta_{i+1}, \ldots , \beta_d) \notin \operatorname{C}(\negs)$, for any $\beta_1, \ldots, \beta_{i-1}, \beta_{i+1}, \ldots , \beta_d \in \mathbb{N}_0$, i.e., it belongs to $S$. Also, $\negs = (s_1, \ldots, s_{i-1}, (s_{i}+1) - 1, s_{i+1}, \ldots , s_d) \in \operatorname{C}(\negs)$, for all $i \in [d]$, hence it is not in $S$. Therefore, $\negc = (s_1 +1, \ldots , s_d + 1)$ is the corner of $S$. 
\end{proof}

The ordinary GNS with corner $\negc$ will be denoted by $\mathcal{O}(\negc)$.

\begin{proposition} \label{prop produto corner ordinary}
Let $S  \subset \mathbb{N}_0^d$ be a GNS with corner $\negc$ and genus $g > 0$. Then the following statements are equivalent: 
\begin{enumerate}[\rm (i)]
	\item $S = \mathcal{O}(\negc)$;
	\item $\prod_{i=1}^{d} c_i = g+1$.
\end{enumerate}
\end{proposition}

\begin{proof}
Let $\negc = (c_1, \ldots, c_d)$. \\
$(i) \Rightarrow (ii)$. Suppose that $S$ is an ordinary GNS with corner $\negc$ and genus $g$. So, there is some $\negs = (s_1, \ldots, s_d) \in \mathbb{N}_0^d$ such that $S = \{ \mathbf{0} \} \cup (\mathbb{N}_0^d \setminus \operatorname{C}(\negs))$. Thus, the set of gaps of $S$ is $\operatorname{H}(S) = \{\negalpha \in \mathbb{N}_0^d \mbox{ : } \mathbf{0} \neq \negalpha \leq \negs \}$, and it follows that $g=|\operatorname{H}(S)| = \prod_{i=1}^{r} (s_i + 1) - 1$. Lemma \ref{lemma corner ordinary} ensures that $s_i + 1 = c_i$.\\
$(ii) \Rightarrow (i)$. Remark \ref{gaps} guarantees that $\operatorname{H}(S) \subseteq \operatorname{C}(\negc - \neg1)$. Hence, $g = |\operatorname{H}(S)| \leq |\{ \negalpha \mbox{ ; } \mathbf{0} \neq \negalpha \leq \negc - \neg1 \}| = \prod_{i=1}^{d} c_i - 1$. By condition $(ii)$, we conclude that $\operatorname{H}(S) = \operatorname{C}(\negc - \neg1) \setminus \{\mathbf{0}\}$ and thus $S = \mathcal{O}(\negc)$.
\end{proof}

Next, we investigate a lower bound for the genus of a GNS with respect to the coordinates of its corner.

\begin{theorem}\label{existenceGNSaxes}
	Let $S\subset \mathbb{N}_0^d$ be a GNS with corner $\negc=(c_1,\ldots, c_d)$, where $c_i\geq 2$ for $i \in [d]$. There exists a GNS $S'\subset \mathbb{N}_0^d$ with corner $\negc$ such that $\operatorname{H}(S')$ is contained in the axes of $\mathbb{N}_0^d$ and $\operatorname{g}(S')\leq \operatorname{g}(S)$.
\end{theorem}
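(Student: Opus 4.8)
The plan is to reduce the statement to a one–dimensional genus estimate applied coordinate by coordinate, after exhibiting an explicit candidate for $S'$. First I would construct the candidate directly. For each $i\in[d]$ pick a numerical semigroup $N_i\subseteq\mathbb{N}_0$ whose Frobenius number equals $c_i-1$ (possible since $c_i\geq 2$), and set
$$\operatorname{H}(S'):=\bigcup_{i=1}^{d}\{\,t\nege_i : t\in\mathbb{N}_0\setminus N_i,\ t\geq 1\,\}, \qquad S':=\mathbb{N}_0^d\setminus\operatorname{H}(S').$$
I would check that $S'$ is a GNS: a nonzero point lying on the $i$-th axis can only be written as a sum of points on that same axis, so closure of $S'$ under addition reduces to closure of each $N_i$, and finiteness of $\operatorname{H}(S')$ is clear. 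By construction $\operatorname{H}(S')$ lies on the axes, and since each $N_i$ has Frobenius number $c_i-1\geq 1$ we get $\lub(\operatorname{H}(S'))=\negc-\neg1$, so Theorem \ref{lub} gives corner $\negc$. As the pieces are disjoint off the origin, $\operatorname{g}(S')=\sum_{i=1}^d \operatorname{g}(N_i)$. Choosing each $N_i$ of minimal genus among numerical semigroups with Frobenius number $c_i-1$, one has $\operatorname{g}(N_i)=\lceil c_i/2\rceil$, so the whole theorem comes down to the inequality
$$\operatorname{g}(S)\ \geq\ \sum_{i=1}^{d}\Big\lceil \tfrac{c_i}{2}\Big\rceil .$$

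The key tool I would isolate is a reflection estimate: for any gap $\negx\in\operatorname{H}(S)$ the involution $\nega\mapsto\negx-\nega$ of $\operatorname{C}(\negx)$ sends $S\cap\operatorname{C}(\negx)$ into $\operatorname{H}(S)\cap\operatorname{C}(\negx)$, since $\nega,\negx-\nega\in S$ would force $\negx=\nega+(\negx-\nega)\in S$. Being injective, it gives $|\operatorname{H}(S)\cap\operatorname{C}(\negx)|\geq\tfrac12|\operatorname{C}(\negx)|$. Applied to $\negx=(c_i-1)\nege_i$, whose box is the axis segment $\{t\nege_i : 0\leq t\leq c_i-1\}$, this is exactly the classical bound $c\leq 2g$ for the induced numerical semigroup $S_i:=\{t\in\mathbb{N}_0 : t\nege_i\in S\}$, whose Frobenius number is $F_i:=\max\{t : t\nege_i\in\operatorname{H}(S)\}$.

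When no coordinate is \emph{deficient}, i.e. $F_i=c_i-1$ for every $i$, the argument finishes immediately: then $(c_i-1)\nege_i$ is a gap, so the reflection estimate applies on each axis and the gaps of the $S_i$ are genuine gaps of $S$ lying on pairwise distinct axes, whence $\operatorname{g}(S)\geq\sum_{i}\operatorname{g}(S_i)\geq\sum_i\lceil c_i/2\rceil$; in fact one may simply take $N_i=S_i$.

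The main obstacle is the deficient case $F_i<c_i-1$, in which the maximal $i$-th coordinate $c_i-1$ is attained only by off-axis gaps, so $(c_i-1)\nege_i\in S$ is \emph{not} a gap and the reflection estimate cannot be applied on the axis; the induced semigroup $S_i$ then does not see the full conductor. For each such $i$ I would instead pick a gap $\negx^{(i)}$ with $x^{(i)}_i=c_i-1$ and use the reflection estimate on $\operatorname{C}(\negx^{(i)})$ to manufacture the missing gaps off the axes. The difficulty, and the crux of the proof, is to account for these extra off-axis gaps across all directions at once without double counting. I expect the cleanest organization is an induction on $d$ by slicing along the last coordinate: the slice $\{\alpha_d=0\}$ is a GNS in $\mathbb{N}_0^{d-1}$ to which the inductive hypothesis applies, while the reflection estimate must be used to show that the remaining slices contribute at least $\lceil c_d/2\rceil$, together with whatever deficit in the first $d-1$ directions was lost in passing to the slice (where the projected corner may have smaller coordinates). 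Controlling this deficit—equivalently, proving the displayed inequality with the full sum realized by disjoint families of gaps—is where essentially all of the work lies.
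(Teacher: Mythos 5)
There is a genuine gap. Your reduction of the theorem to the single inequality $\operatorname{g}(S)\geq\sum_{i=1}^d\lceil c_i/2\rceil$ is logically valid (the explicit $S'$ built from irreducible numerical semigroups on the axes does have corner $\negc$, gaps on the axes, and genus $\sum_i\lceil c_i/2\rceil$), and your reflection estimate together with the treatment of the non-deficient case is correct. But the deficient case --- where $(c_i-1)\nege_i\in S$ and the maximal $i$-th coordinate is attained only by off-axis gaps --- is exactly where you stop, and you say yourself that ``essentially all of the work lies'' there. That is not a minor loose end: in the paper the inequality $\operatorname{g}(S)\geq\sum_i\lceil c_i/2\rceil$ is Corollary \ref{irreducible}, and its only proof goes \emph{through} Theorem \ref{existenceGNSaxes}, so you are in effect proposing to derive the theorem from a statement whose known proof depends on the theorem. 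Your sketched strategy (reflection on the box $\operatorname{C}(\negx^{(i)})$ of an off-axis gap with $x^{(i)}_i=c_i-1$, plus induction on $d$ by slicing) runs straight into the double-counting problem you identify: already for $\operatorname{H}(S)=\{(1,0),(1,1),(3,0)\}$ with corner $(4,2)$, the reflection on $\operatorname{C}((1,1))$ produces the gaps $(1,0)$ and $(1,1)$, the first of which was already counted on the first axis, and the count $\operatorname{g}(S)=3=2+1$ is tight, so there is no slack to absorb sloppy accounting. Without a worked-out bookkeeping scheme the proof is incomplete.

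For comparison, the paper avoids the genus inequality entirely and works by direct redistribution: it keeps the gaps of $S$ on the axes ($H_0$), discards every off-axis gap all of whose coordinate projections $h_j\nege_j$ are already gaps ($H_1$), and sends each remaining off-axis gap $\negh$ to $\negh'=h_{j_0}\nege_{j_0}$, where $j_0$ is the least index with $h_{j_0}\nege_{j_0}\in S\setminus\{\mathbf{0}\}$; then $S'=\NN_0^d\setminus(H_0\cup\{\negh'\})$. The two nontrivial verifications are that this complement is closed under addition and that every direction $i$ still receives a gap with coordinate $c_i-1$ --- the latter is precisely how the paper handles your ``deficient'' directions, by peeling off the projections $w_j\nege_j\in S$ with $j<i$ from a gap $\negw$ with $w_i=c_i-1$ and following where the residual gap is sent. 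The genus inequality $\operatorname{g}(S')\leq\operatorname{g}(S)$ is then immediate because each new gap comes from at most one old gap. If you want to salvage your route, you would need to supply an independent proof of $\operatorname{g}(S)\geq\sum_i\lceil c_i/2\rceil$ with disjoint witnessing families of gaps; as written, the argument does not close.
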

\begin{proof}  If $d = 1$, then $S' = S$. If $d > 1$, let $H_{0}:=\{\negh \in \operatorname{H}(S) \ : \ \negh \mbox{ is in the axes of } \mathbb{N}_0^d\}$ and $H_1 := \{ (h_1,\ldots , h_d) \in \operatorname{H}(S) \setminus H_0 \ : \ h_j\nege_j\in \operatorname{H}(S) \mbox{ for all } j \in [d] \}$. For \linebreak $\negh = (h_1,\ldots,h_d)\in \operatorname{H}(S)\setminus (H_{0} \cup H_1)$, define $\negh' := h_{j_0}\nege_{j_0}$, where $j_0=\min\{j\in [d] \ : \ h_j\nege_j\in S\setminus\{\mathbf{0}\}\}$. Now, taking into account the process of associating $\negh$ to $\negh'$ as above, consider the set
$$
\mathcal{H} = H_0 \cup \{ \negh' \ : \ \negh \in \operatorname{H}(S)\setminus (H_{0} \cup H_1)\}.
$$

	Note that $\mathcal{H}$ is contained in the axes of $\mathbb{N}_0^d$. Let us show that $S'=\mathbb{N}_0^d\setminus \mathcal{H}$ is a GNS in $\mathbb{N}_0^d$ with corner $\negc$. For this purpose, we will prove that if $x\nege_i\in \mathcal{H}$ with $x\nege_i=(y+z)\nege_i$ and $y\nege_i\in S'$, then  $z\nege_i\notin S'$. Now, observe that $y\nege_i\in S'$ implies that $y\nege_i\in S$, because otherwise we would have $y\nege_i\notin S'$. In the case that $z\nege_i\notin S$, we have that $z\nege_i\notin S'$ since $\mathcal{H}$ contains the gaps of $S$ in the axes of $\mathbb{N}_0^d$. On the other hand, if $z\nege_i\in S$, as $x\nege_i=(y+z)\nege_i$ and $y\nege_i\in S$, we obtain $x\nege_i\in S$. Hence, since $x\nege_i\in S$ and $x\nege_i\in \mathcal{H}$, it follows from the construction of $\mathcal{H}$ that $x\nege_i=\negh'$ for some $\negh\in \operatorname{H}(S)$ outside the axes of $\mathbb{N}_0^d$. Now, let $\negy$ be such that $\negh=y\nege_i+\negy$. By the definition, we conclude that the $i$-th coordinate of $\negy$ is $z$. Since $\negh \notin S$ and $y\nege_i \in S$, we must have $\negy\notin S$. Furthermore, because of $\negh$ is outside the axes of $\mathbb{N}_0^d$, so is $\negy$. Since the first $i-1$ coordinates of $\negh$ and $\negy$ are the same, $\negh'$ lies in the axis $Ox_i$ and $z\nege_i \in S$, we conclude that $\negy' = z\nege_i \in \mathcal{H}$. Therefore, $S'$ is a GNS.

	In order to prove that $S'$ has corner $\negc$, let us show that $\negh^i=(c_i-1)\nege_i\in \mathcal{H}$ for any $i \in [d]$. First, notice that $\negh^1\in \mathcal{H}$ because there exists $\negh\in \operatorname{H}(S)$ such that $h_1=c_1-1$, and thus we have either $\negh^1\notin S$ or $\negh^1=\negh'$. In both cases, we obtain $\negh^1\in \mathcal{H}$. Now, let us consider $\negh^i$ for $i \in [2,d]$. If $\negh^i\notin S$, then $\negh^i\in H_0 \subset \mathcal{H}$  and we are done. If $\negh^i\in S$, there exists $\negw = (w_1, \ldots, w_r) \in \operatorname{H}(S)$ such that $w_i=c_i-1$. If $w_j\nege_j\notin S$ for all $j \in [i-1]$, as $\negh^i=w_i\nege_i\in S$, then $\negh^i=\negw'$ and thus $\negh^i\in \mathcal{H}$. Now, suppose that $J = \{j \ : \ w_j\nege_j \in S \mbox{ and } j < i\}$ is a nonempty set and let $\negx=\negw-\sum_{j \in J}w_{j}\nege_{j}\notin S$. There are two possibilities: (1) if $\negx$ is in the axis of $\NN_0^d$, then $\negx \in H_0$ and thus $\negx = \negh^i$; (2) if $\negx$ is not in the axis of $\NN_0^d$: since the $i$-th coordinate of $\negx$ is $w_i=c_i-1\geq 1$ and $\negx \in \operatorname{H}(S)$, we have that $\negh^i=\negx'$ because $x_i\nege_i=\negh^i\in S$. Thus, we can conclude that $S'$ has corner $\negc$.
	
	Since every gap of $S'$ comes from at most one gap of $S$ by the construction of $\mathcal{H}$, we have the inequality $\operatorname{g}(S')\leq \operatorname{g}(S)$.
\end{proof}

\begin{example}\label{exemplocotagc}
Consider the GNS $S$ in $\mathbb{N}_0^2$ with $\operatorname{H}(S) = \{(1,0), (1,1), (3,0)\}$, which has corner $(4,2)$. We shall construct the set $\mathcal{H}$ following the proof of Theorem \ref{existenceGNSaxes}. In this case, $H_0 = \{(1,0), (3,0)\}, H_1 = \emptyset$ and $\mathcal{H} = \{(1,0), (3,0)\} \cup \{(1,1)'\}$. Theorem \ref{existenceGNSaxes} ensures that $S' = \NN_0^2 \setminus \mathcal{H}$ is a GNS with corner $(4,2)$ and now we explicit it by the set of gaps.  By definition, $(1,1)' = (0,1)$, since $(1,0) \notin S$ and $(0,1) \in S$. Hence, $\mathcal{H} = \{(1,0), (3,0), (0,1)\}$ is the set of gaps of $S'$, which is a GNS with all the gaps in the axis. Moreover, $\operatorname{g}(S) = 3$ and $\operatorname{g}(S') = 3$.
\end{example}

\begin{example}
Consider the GNS $S$ in $\mathbb{N}_0^4$ with 
\begin{center}
$\operatorname{H}(S)=\{(0, 0, 0, 1), (0, 0, 1, 0), (0, 0, 1, 1), (0, 0, 2, 0), (0, 0, 3, 0), (0, 0, 3, 1), (0, 1, 1, 1),$
$(1, 0, 0, 0), (1, 0, 0, 1), (1, 0, 0, 3), (1, 0, 1, 0), (1, 0, 2, 0), (1, 0, 2, 2), (1, 0, 6, 0), (3, 0, 0, 0)\}.$
\end{center}

Observe that $S$ has corner $(4,2,4,4)$. 	This example also illustrates the method of obtaining $S'$ from $S$ as in Theorem \ref{existenceGNSaxes}. In this case, 
$$H_0 = \{(0, 0, 0, 1), (0, 0, 1, 0), (0, 0, 2, 0), (0, 0, 3, 0), (1, 0, 0, 0), (3, 0, 0, 0)\},$$ 
$$H_1 = \{(0, 0, 1, 1), (0, 0, 3, 1), (1, 0, 0, 1),
(1, 0, 1, 0), (1, 0, 2, 0)\}$$
and the set $\mathcal{H}$ is

\begin{center}
$\mathcal{H} =\{(0, 0, 0, 1), (0, 0, 1, 0), (0, 0, 2, 0), (0, 0, 3, 0), (1, 0, 0, 0), (3, 0, 0, 0)\} \cup$
$\{(0, 1, 1, 1)', (1, 0, 0, 3)', (1, 0, 2, 2)', (1, 0, 6, 0)'\}$.
\end{center}

Theorem \ref{existenceGNSaxes} ensures that $S' = \NN_0^2 \setminus \mathcal{H}$ is a GNS with corner $(4,2,4,4)$ and its set of gaps is

\begin{center}
$\operatorname{H}(S')=\{(0, 0, 0, 1), (0, 0, 1, 0), (0, 0, 2, 0), (0, 0, 3, 0),  (1, 0, 0, 0), (3, 0, 0, 0)$, 
$(0, 1, 0, 0), (0, 0, 0, 3), (0, 0, 0, 2), 
(0, 0, 6, 0)\}.$
\end{center}

Furthermore, $\operatorname{g}(S) = 15$ and $\operatorname{g}(S') = 10$.
\end{example}

Next, we present a lower bound for the genus of a GNS in terms of the coordinates of its corner.

\begin{proposition}\label{cotainferiorgenero}
Let $S \subset \NN_0^d$ be a GNS with corner $\negc = (c_1, \ldots, c_d)$, with $c_i \geq 2$ for all $i$. Then 
$$
\sum_{i=1}^{d} c_i \leq 2\operatorname{g}(S).
$$
\end{proposition}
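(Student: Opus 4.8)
The plan is to reduce the problem to the one-dimensional setting of numerical semigroups by invoking Theorem \ref{existenceGNSaxes}. Since $c_i \geq 2$ for all $i$, that theorem produces a GNS $S' \subset \mathbb{N}_0^d$ with the same corner $\negc$, whose gaps all lie on the coordinate axes of $\mathbb{N}_0^d$, and satisfying $\operatorname{g}(S') \leq \operatorname{g}(S)$. Hence it suffices to prove $\sum_{i=1}^d c_i \leq 2\operatorname{g}(S')$, as the desired inequality then follows immediately from $\operatorname{g}(S') \leq \operatorname{g}(S)$.

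For each $i \in [d]$, I would set $T_i := \{x \in \mathbb{N}_0 : x\nege_i \in S'\}$. Because $S'$ is a submonoid and has only finitely many gaps on the axis $Ox_i$, each $T_i$ is a numerical semigroup, and its gaps correspond bijectively to the gaps of $S'$ lying on $Ox_i$; thus its genus $g_i := \operatorname{g}(T_i)$ equals the number of gaps of $S'$ on that axis. Moreover, by Theorem \ref{lub} we have $c_i = 1 + \max\{h_i : \negh \in \operatorname{H}(S')\}$, and since every gap of $S'$ lies on the axes, a gap off $Ox_i$ contributes $0$ in its $i$-th coordinate; because $c_i - 1 \geq 1$ there must be a gap of the form $(c_i - 1)\nege_i$. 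Consequently $c_i - 1$ is the Frobenius number of $T_i$, so $c_i$ is precisely the conductor of $T_i$.

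The core of the argument is then the classical numerical semigroup bound, recalled in Section 4 as part of $g + 1 \leq c \leq 2g$, which gives $c_i \leq 2 g_i$ for each $i$. Summing over $i$ yields $\sum_{i=1}^d c_i \leq 2\sum_{i=1}^d g_i$. The final step is to observe that the axes of $\mathbb{N}_0^d$ meet only at $\mathbf{0} \in S'$, so any gap lying on an axis has a single nonzero coordinate and therefore belongs to exactly one $Ox_i$; since all gaps of $S'$ lie on the axes, they partition across the axes and $\sum_{i=1}^d g_i = \operatorname{g}(S')$. Combining these facts gives $\sum_{i=1}^d c_i \leq 2\operatorname{g}(S') \leq 2\operatorname{g}(S)$.

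The main obstacle here is bookkeeping rather than depth. One must verify that the reduction via Theorem \ref{existenceGNSaxes} genuinely preserves the corner (it does) and, crucially, that the total genus decomposes as the disjoint sum of the axis genera $g_i$; this decomposition relies on the gaps lying on the axes, which $S'$ guarantees. The hypothesis $c_i \geq 2$ enters twice and must be used explicitly: it both authorizes the application of Theorem \ref{existenceGNSaxes} and forces each axis to carry a gap, pinning $c_i$ to the conductor of $T_i$. Without $c_i \geq 2$ the identification of $c_i$ with the conductor of $T_i$ could fail, so this assumption is essential to the argument.
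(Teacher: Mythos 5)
Your proof is correct and follows essentially the same route as the paper: reduce via Theorem \ref{existenceGNSaxes} to a GNS with all gaps on the axes, identify each axis with a numerical semigroup $T_i$ of conductor $c_i$, apply the classical bound $c_i \leq 2g_i$, and sum using the decomposition $\operatorname{g}(S')=\sum_i g_i$. You in fact supply more detail than the paper does at the step where $c_i$ is identified with the conductor of $T_i$ (the paper only asserts ``one can check''), and that extra justification is sound.
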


\begin{proof}
We want to minimize $\operatorname{g}(S)$, for $S$ in the set of all GNS with fixed corner $\negc$. By Theorem \ref{existenceGNSaxes}, we only have to check those GNS with all the gaps in the axes of $\NN_0^d$.

Let $S$ be a GNS with all the gaps in the axes of $\NN_0^d$ and consider $S_i := \{s \in \NN_0 \ : \ s\nege_i \in S\}$. One can check that $S_i$ is a numerical semigroup with conductor $c_i$. Let $g_i$ be the genus of $S_i$. By numerical semigroups properties, we obtain $c_i \leq 2g_i$, for all $i$ and the genus of $S$ is given by $\sum g_i$. Hence, $\sum c_i \leq \sum 2g_i = 2\operatorname{g}(S)$ and we are done.
\end{proof}

Notice that both GNS given in Example \ref{exemplocotagc} are examples that reach this last bound.

\begin{remark}
The arithmetic-geometric mean inequality guarantees that if $S \subset \NN_0^d$ is a GNS with genus $g$ and corner $\negc = (c_1, \ldots, c_d)$, with $c_i \geq 2$, for all $i$, then
$$\prod_{i=1}^{d} c_i \leq \left(\frac{1}{d} \cdot \sum_{i=1}^{d} c_i\right)^d \leq \left(\frac{2g}{d}\right)^d.$$
\end{remark}

Next, we exhibit a GNS with corner $\negc$ with the least possible genus. For this propose, we deal with irreducible numerical semigroups. Recall that a numerical semigroup with genus $g$ and conductor $c$ satisfies $g \geq \left \lceil \frac{c}{2} \right \rceil$. Recall furthermore that a numerical semigroup is irreducible if, and only if, $g = \left \lceil \frac{c}{2} \right \rceil$ (cf. \cite{GS-R}). Moreover, for each $c \in \NN, c \geq 2$, there is an irreducible numerical semigroup with conductor $c$.

\begin{corollary}\label{irreducible}
	Let $\negc=(c_1,\ldots, c_d) \in \NN_0^d$, where $c_i\geq 2$ for all $i$. There exists a GNS $T$ with corner $\negc$, such that
	$$\operatorname{g}(T)= \sum_{i=1}^d \left\lceil \frac{c_i}{2}\right\rceil.$$
	Moreover, this is the least possible genus for a GNS with corner $\negc$.
\end{corollary}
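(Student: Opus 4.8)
The plan is to combine the existence of irreducible numerical semigroups with the axis-reduction machinery of Theorem~\ref{existenceGNSaxes}. The corollary makes two claims: first, that a GNS $T$ with corner $\negc$ and genus exactly $\sum_{i=1}^d \lceil c_i/2\rceil$ exists, and second, that no GNS with corner $\negc$ can have smaller genus. I would handle existence and minimality separately, since they draw on different facts recalled just before the statement.

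For the existence half, the construction is to build $T$ coordinate by coordinate along the axes. For each $i \in [d]$, the paragraph preceding the corollary guarantees an irreducible numerical semigroup $S_i \subseteq \NN_0$ with conductor $c_i$ and genus $g_i = \lceil c_i/2\rceil$ (this uses $c_i \geq 2$). I would then define $T := \NN_0^d \setminus \mathcal{H}$, where $\mathcal{H} = \bigcup_{i=1}^d \{\, s\,\nege_i : s \in \operatorname{H}(S_i)\,\}$ collects the gaps of each $S_i$ placed on the $i$-th axis. The routine verifications are that $T$ is a submonoid with finite complement (so a GNS), that its gaps lie exactly on the axes with $T \cap Ox_i$ corresponding to $S_i$, and that its corner is $\negc$ — the latter because $c_i - 1$ is the largest gap of $S_i$, so $(c_i-1)\nege_i \notin T$ while everything with $i$-th coordinate $\geq c_i$ is in $T$, matching the two conditions of Definition~\ref{corner}. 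The genus is then $\operatorname{g}(T) = \sum_{i=1}^d g_i = \sum_{i=1}^d \lceil c_i/2\rceil$ by disjointness of the axes (they meet only at $\mathbf{0}$, which is not a gap).

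For the minimality half, I would invoke Proposition~\ref{cotainferiorgenero} to reduce to the axial case: given any GNS $S$ with corner $\negc$, Theorem~\ref{existenceGNSaxes} produces a GNS $S'$ with the same corner, all gaps on the axes, and $\operatorname{g}(S') \leq \operatorname{g}(S)$, so it suffices to bound the genus of axial GNS from below. For an axial $S'$, each $S_i := \{s \in \NN_0 : s\,\nege_i \in S'\}$ is a numerical semigroup with conductor $c_i$, and the recalled inequality $g_i \geq \lceil c_i/2\rceil$ for numerical semigroups gives $\operatorname{g}(S') = \sum_i g_i \geq \sum_i \lceil c_i/2\rceil$. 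Chaining the two inequalities yields $\operatorname{g}(S) \geq \operatorname{g}(S') \geq \sum_{i=1}^d \lceil c_i/2\rceil$ for every GNS $S$ with corner $\negc$, which is exactly the asserted lower bound and shows the $T$ built above is optimal.

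I expect the only genuine obstacle to be in the existence step, specifically confirming that the corner of $T$ is $\negc$ rather than something smaller: one must check condition~(2) of Definition~\ref{corner} holds in each coordinate, i.e.\ that $(c_i - 1)\nege_i$ really is a gap, which follows from $c_i - 1 = \max \operatorname{H}(S_i)$ being the Frobenius number of $S_i$ (irreducibility forces $c_i - 1$ to be a gap). The submonoid and finiteness checks for $T$ are direct because the axes interact only through $\mathbf{0}$, so sums of axial elements on distinct axes never land in $\mathcal{H}$; this is routine and I would state it briefly rather than belabor it.
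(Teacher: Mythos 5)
Your proposal is correct and follows essentially the same route as the paper: the same axial construction $T=\NN_0^d\setminus\bigcup_i\{h\,\nege_i : h\notin T_i\}$ from irreducible numerical semigroups with conductors $c_i$, and the same reduction of the lower bound to the axial case via Theorem~\ref{existenceGNSaxes} followed by the coordinatewise inequality $g_i\geq\lceil c_i/2\rceil$. (Minor note: the reduction to axes is Theorem~\ref{existenceGNSaxes} itself, not Proposition~\ref{cotainferiorgenero}, but your argument already uses the right result.)
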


\begin{proof}
For each $i \in [d]$, let $T_i$ be an irreducible numerical semigroup with conductor $c_i$. By taking $\mathcal{H} := \bigcup_{i=1}^{d} \{h \nege_i \ : \ h \notin T_i\}$, one can check that $T = \NN_0^d \setminus \mathcal{H}$ is a GNS with genus $\sum_{i=1}^d \left\lceil \frac{c_i}{2}\right\rceil$.

Now, let $S$ be a GNS with corner $\negc$. From Theorem \ref{existenceGNSaxes}, there is a GNS $S'$ with all gaps in the axes such that $\operatorname{g}(S') \leq \operatorname{g}(S)$. For each $i \in [d]$, consider the numerical semigroup $S'_i := \{s \in \NN_0 \ : \ s\nege_i \in S'\}$ with genus $g_i$. From the construction of $S'_i$, its conductor is $c_i$ (since the corner of $S'$ is $\negc$) and the genus of $S'$ is $\sum g_i$. Since $g_i \geq  \left \lceil \frac{c_i}{2} \right \rceil$ for each $i \in [d]$ and we conclude that
$$\operatorname{g}(S) \geq \operatorname{g}(S') \geq \sum_{i=1}^d \left\lceil \frac{c_i}{2}\right\rceil.$$

%Recall that if $c_i$ is even, then $S_i$ is a symmetric numerical semigroup (the genus of $S_i$ is $c_i/2$) and if $c_i$ is odd, $S_i$ must be a pseudo-symmetric numerical semigroup (the genus of $S_i$ is $(c_i+1)/2$)
\end{proof}

%We recall that if $c \geq 2$ is even, then $\operatorname{H} = \{1, 3, \ldots, c - 1\}$ is the set of gaps of a symmetric numerical semigroup and if $c \geq 3$ is odd, then $\operatorname{H} = \{1, 2, \ldots, \frac{c-1}{2}, c - 1\}$ is the set of gaps of a pseudo-symmetric numerical semigroup. Hence, it is always possible to exhibit the sets $S_i$ defined in the proof of Corollary \ref{irreducible}.

To end this section, we explain the reason for the hypothesis $c_i \geq 2$, for every $i$ in last results. We also explain how we can obtain a relation between the sum of the coordinates of the corner and the genus of a GNS, if some of the coordinates of the corner are equal one. For instance, the GNS $S = \mathbb{N}_0^2 \setminus \{(1,0)\}$ has genus $g = 1$ and corner $(2,1)$; the sum of the coordinates of the corner is greater than twice the genus. Hence, Proposition \ref{cotainferiorgenero} does not hold in this case. 

\begin{remark}
Let $S \subset \mathbb{N}_0^d$ be a GNS with genus $g$ and corner $\negc = (c_1, \ldots, c_d)$ such that the set of indexes $\mathcal{U}(\negc) = \{j \ : \ c_j = 1\}$ is nonempty. We want to obtain an upper bound for sum of the coordinates of $\negc$ in terms of $g$, where $|\mathcal{U}(\negc)| = k$ is a positive number. By the definition of corner, we conclude that $k \leq d - 1$.  Notice that all the gaps of $S$ are of the form $\negh = (h_1, \ldots, h_d)$, where $h_i = 0$, if $i \in \mathcal{U}(\negc)$. Hence, we can look at the set $H(S)$ as a subset of $\NN_0^{d-k}$, by erasing all the coordinates that are in a $j$-th position, with $j \in J$. This new set is a GNS in $\NN_0^{d-k}$ with corner $\bar{\negc}$, which has all the coordinates greater than one. Moreover, the genus of this new GNS is the same as the genus of $S$. Hence, we can apply Proposition \ref{cotainferiorgenero}. In this case, the sum of the coordinates of the corner $\bar{\negc}$ is such that $\sum_{i \notin \mathcal{U}(\negc)} c_i \leq 2g$. By summing up $\sum_{i \in \mathcal{U}(\negc)} c_i$ in both sides and recalling that $c_i = 1$, for $i \in \mathcal{U}(\negc)$, we conclude that

$$\sum_{i = 1}^d c_i \leq 2g + k,$$

which is globally bounded by $2g + d - 1$.
\end{remark}

\section{The tree of GNS with fixed corner}

In this section, we give an algorithm to compute all the GNSs with fixed corner. Consider $\mathcal{C}(\negc)$ the family of GNSs having corner $\negc$. From Proposition \ref{remark corner 2}, we shall consider $\negc \in \mathbb{N}^d \setminus \{\mathbf{1}\}$ and one can take the ordinary GNS $\mathcal{O}(\negc)$ in $\mathcal{C}(\negc)$. Thus, we present a procedure to obtain all elements in $\mathcal{C}(\negc)$ from $\mathcal{O}(\negc)$. In special, we show that this method allow us arranging all GNSs having fixed corner into a rooted tree.\\

For $\negx\in \mathbb{N}_0^d$, recall that $\operatorname{C}(\negx)=\{\negy\in \NN_0^d \ : \ \negy\leq \negx\}$. Given a GNS $S$ and $\negx \in \NN_0^d$ such that $\operatorname{C}(\negx-\mathbf{1})\supset\operatorname{H}(S)$, define for each $i\in [d]$ the sets $$\nabla_i(S,\negx):=\{\negh\in \operatorname{H}(S)  \ : \ h_i= x_i-1 \ \mbox{and } h_j\leq x_j-1 \ \mbox{for } j\neq i\},$$
where $\negh = (h_1, \ldots, h_d)$ and $\negx = (x_1, \ldots, x_d)$.  Next, we characterize GNSs with fixed corner $\negc$ in terms of the sets $\nabla_i(S,\negc)$.

\begin{lemma}\label{cornernabla}
	Let $S\subseteq \NN_0^d$ be a GNS and let $\negc \in \NN_0^d$ such that $\operatorname{C}(\negc-\mathbf{1})\supset\operatorname{H}(S)$. Then $S$ has corner $\negc$ if and only if $\nabla_i(S,\negc)\neq \emptyset$ for all $i\in [d]$.
\end{lemma}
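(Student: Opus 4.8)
The plan is to show that, under the standing hypothesis $\operatorname{C}(\negc-\mathbf{1})\supseteq\operatorname{H}(S)$, the requirement $\negc\in S$ and condition (1) of Definition \ref{corner} come for free, so that the equivalence collapses to a tautological matching of condition (2) with the nonemptiness of the sets $\nabla_i(S,\negc)$. In other words, the real content is to argue that, for such a $\negc$, being the corner is entirely controlled by condition (2).

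First I would record two immediate consequences of the hypothesis. Since $\negc\not\leq\negc-\mathbf{1}$, we have $\negc\notin\operatorname{C}(\negc-\mathbf{1})$, and because every gap lies in $\operatorname{C}(\negc-\mathbf{1})$ this forces $\negc\notin\operatorname{H}(S)$, i.e. $\negc\in S$. Next, since each $\negh=(h_1,\ldots,h_d)\in\operatorname{H}(S)$ satisfies $\negh\leq\negc-\mathbf{1}$, the side constraints ``$h_j\leq c_j-1$ for $j\neq i$'' in the definition of $\nabla_i(S,\negc)$ are automatically fulfilled; hence $\nabla_i(S,\negc)$ is exactly the set of gaps whose $i$-th coordinate equals $c_i-1$.

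Second, I would verify that condition (1) of Definition \ref{corner} is automatic. If $\alpha\geq c_i$ and the point $\mathbf{p}=(\beta_1,\ldots,\beta_{i-1},\alpha,\beta_{i+1},\ldots,\beta_d)$ were a gap, then $\mathbf{p}\in\operatorname{C}(\negc-\mathbf{1})$ would give $\alpha\leq c_i-1$, contradicting $\alpha\geq c_i$; thus $\mathbf{p}\in S$. Consequently, whether or not $\negc$ is the corner of $S$ hinges solely on condition (2).

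Finally, I would observe that condition (2) for the index $i$ asserts precisely the existence of a gap with $i$-th coordinate equal to $c_i-1$, which by the first step is exactly the statement $\nabla_i(S,\negc)\neq\emptyset$. Hence $\negc$ satisfies all of Definition \ref{corner} if and only if $\nabla_i(S,\negc)\neq\emptyset$ for every $i\in[d]$, and by Proposition \ref{unicidade corner} such a $\negc$ is then the unique corner. For the forward implication one may alternatively invoke Theorem \ref{lub}, which gives $c_i-1=\max\{h_i:\negh\in\operatorname{H}(S)\}$ and hence a gap realizing this maximum in each coordinate. I do not expect a genuine obstacle here: the only points requiring care are the two reductions of the first step, namely the collapse of the side constraints in $\nabla_i$ and the membership $\negc\in S$, after which the lemma is essentially a restatement of condition (2) of the corner definition.
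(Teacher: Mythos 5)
Your proof is correct, but it takes a more elementary route than the paper. The paper proves this lemma by invoking Theorem \ref{lub} in both directions: if some $\nabla_i(S,\negc)$ were empty, the $i$-th coordinate of $\lub(\operatorname{H}(S))$ would be less than $c_i-1$, contradicting $\negc=\lub(\operatorname{H}(S))+\mathbf{1}$; conversely, nonemptiness of all $\nabla_i(S,\negc)$ together with $\operatorname{H}(S)\subseteq\operatorname{C}(\negc-\mathbf{1})$ forces $\lub(\operatorname{H}(S))=\negc-\mathbf{1}$, so $\negc$ is the corner by that theorem. You instead verify Definition \ref{corner} directly: under the standing hypothesis, $\negc\in S$ and condition (1) are automatic (any point with $i$-th coordinate at least $c_i$ cannot lie in $\operatorname{C}(\negc-\mathbf{1})$, hence cannot be a gap), the side constraints in the definition of $\nabla_i(S,\negc)$ are vacuous, and condition (2) for index $i$ is verbatim the statement $\nabla_i(S,\negc)\neq\emptyset$. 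Your observations are all accurate, and what your approach buys is independence from Theorem \ref{lub} (in particular from its hypothesis $g>0$) at the cost of a slightly longer write-up; the paper's version buys brevity by reusing the already-established identity $\negc=\lub(\operatorname{H}(S))+\mathbf{1}$, which is essentially the alternative you mention at the end. Either argument is acceptable here.
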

\begin{proof}
	If $\nabla_i(S,\negc)= \emptyset$ for some $i\in [d]$, then there is no gap of $S$ such that its $i$-th coordinate is $c_i - 1$. Hence, the $i$-th coordinate of $\operatorname{lub}(\operatorname{H}(S))$ is smaller than $c_i - 1$, contradicting Theorem \ref{lub}. On the other hand, if $\nabla_i(S,\negc)\neq \emptyset$ for all $i\in [d]$, then $\negc=\operatorname{lub}(\operatorname{H}(S))+\mathbf{1}$ is the corner of $S$ by Theorem \ref{lub}.
\end{proof}

We now consider unitary extensions of GNSs which preserve the property of having a fixed corner $\negc$.  Recall that if $S$ is a GNS and $\negx \notin S$, then $S \cup \{\negx\}$ is a GNS if and only if $\negx \in \operatorname{SG}(S)$ (see \cite{irreducible} Proposition 2.3).

\begin{proposition}\label{childcorner}
	Let $S\subseteq \NN_0^d$ be a GNS with corner $\negc$ and $\negx \in \operatorname{SG}(S)$. Then $S\cup \{\negx\}$ has corner $\negc$ if and only if $\nabla_i(S, \negc)\neq \{\negx\}$ for all $i\in [d]$.
\end{proposition}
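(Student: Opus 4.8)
The plan is to reduce everything to Lemma \ref{cornernabla} after tracking how the sets $\nabla_i$ change when we pass from $S$ to $S'=S\cup\{\negx\}$. First I would record that, since $\negx\in\operatorname{SG}(S)$, the set $S'$ is again a GNS and $\operatorname{H}(S')=\operatorname{H}(S)\setminus\{\negx\}$; that is, adjoining a special gap deletes exactly the single gap $\negx$. Because $S$ has corner $\negc$, Remark \ref{gaps} gives $\operatorname{H}(S)\subseteq\operatorname{C}(\negc-\mathbf{1})$, and since $\mathbf{0}\in\operatorname{C}(\negc-\mathbf{1})\setminus\operatorname{H}(S)$ this containment is strict; the same then holds a fortiori for the smaller set $\operatorname{H}(S')$, so the hypothesis $\operatorname{C}(\negc-\mathbf{1})\supset\operatorname{H}(S')$ of Lemma \ref{cornernabla} is met. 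Consequently, by that lemma, $S'$ has corner $\negc$ if and only if $\nabla_i(S',\negc)\neq\emptyset$ for every $i\in[d]$.

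Next I would observe that the defining conditions of $\nabla_i(\cdot,\negc)$ — namely $h_i=c_i-1$ and $h_j\le c_j-1$ for $j\neq i$ — depend only on $\negc$, not on the semigroup; the semigroup enters solely through the requirement $\negh\in\operatorname{H}$. Hence from $\operatorname{H}(S')=\operatorname{H}(S)\setminus\{\negx\}$ one obtains the single key identity
$$\nabla_i(S',\negc)=\nabla_i(S,\negc)\setminus\{\negx\}\qquad\text{for all }i\in[d].$$

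Finally I would combine this with the fact that $S$ already has corner $\negc$: by Lemma \ref{cornernabla} this forces $\nabla_i(S,\negc)\neq\emptyset$ for all $i$. Under this nonemptiness, the difference $\nabla_i(S,\negc)\setminus\{\negx\}$ is empty precisely when $\nabla_i(S,\negc)=\{\negx\}$. Therefore the condition ``$\nabla_i(S',\negc)\neq\emptyset$ for all $i$'' is equivalent to ``$\nabla_i(S,\negc)\neq\{\negx\}$ for all $i$'', which together with the first step yields the stated equivalence. I expect no serious obstacle here: all the content lies in the bookkeeping identity $\nabla_i(S',\negc)=\nabla_i(S,\negc)\setminus\{\negx\}$ and in recalling that the corner of $S$ already makes each $\nabla_i(S,\negc)$ nonempty, so the only way deleting $\negx$ can empty one of them is if $\negx$ was its unique element.
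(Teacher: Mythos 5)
Your proposal is correct and follows the paper's own route: the paper's proof is exactly the one-line reduction to Lemma \ref{cornernabla} via the observation that $\nabla_i(S\cup\{\negx\},\negc)\neq\emptyset$ if and only if $\nabla_i(S,\negc)\neq\{\negx\}$. You merely make explicit the bookkeeping the paper leaves implicit, namely $\nabla_i(S\cup\{\negx\},\negc)=\nabla_i(S,\negc)\setminus\{\negx\}$ together with the nonemptiness of each $\nabla_i(S,\negc)$ coming from $S$ having corner $\negc$, which is needed for the ``if'' direction.
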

\begin{proof}
	By Lemma~\ref{cornernabla}, it suffices to notice that for each $i\in [d]$ we have $\nabla_i(S\cup \{\negx\}, \negc)\neq \emptyset$  if and only if $\nabla_i(S, \negc)\neq \{\negx\}$.
\end{proof}

Recall that if $T$ is a GNS and $\negx \in T$, then $T\setminus \{\negx\}$ is a GNS if and only if $\negx$ is a minimal generator of $T$, that is, $\negx \in T^*\setminus (T^* +T^*)$ (see \cite{failla} Proposition 4.1). We now look for conditions on a minimal generator of a GNS so that the new GNS obtained by taking it out has the same corner as the previous one.

\begin{proposition}
	Let $T\subseteq \NN_0^d$ be a GNS with corner $\negc$ and let $\negx $ be a minimal generator of $T$. Then $T\setminus \{\negx\}$ has corner $\negc$ if and only if $\negx\leq \negc-\mathbf{1}$.
	\label{parentcorner}
\end{proposition}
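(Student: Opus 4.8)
The plan is to reduce the claim to the least-upper-bound description of the corner furnished by Theorem \ref{lub}. Since $\negx$ is a minimal generator of $T$, the recalled consequence of \cite{failla} ensures that $S:=T\setminus\{\negx\}$ is again a GNS, so it has a well-defined corner and it only remains to compute it and compare it with $\negc$. The starting observation is the disjoint-union identity
$$\operatorname{H}(S)=\operatorname{H}(T)\cup\{\negx\},$$
which holds because deleting the point $\negx\in T$ from $T$ turns $\negx$ into a gap while leaving every other membership unchanged; the union is disjoint since $\negx\notin\operatorname{H}(T)$.

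Next I would invoke Theorem \ref{lub} for both semigroups. Treating first the main case in which $T$ has positive genus, the theorem gives $\negc=\lub(\operatorname{H}(T))+\mathbf{1}$, equivalently $\lub(\operatorname{H}(T))=\negc-\mathbf{1}$, and it expresses the corner of $S$ as $\lub(\operatorname{H}(S))+\mathbf{1}$. Since $\lub$ is a coordinatewise maximum, combining this with the identity for $\operatorname{H}(S)$ yields, for every $j\in[d]$,
$$\bigl[\lub(\operatorname{H}(S))\bigr]_j=\max\{c_j-1,\,x_j\},$$
so that the $j$-th coordinate of the corner of $S$ equals $\max\{c_j,\,x_j+1\}$.

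Finally I would read off the equivalence coordinatewise: the corner of $S$ equals $\negc$ precisely when $\max\{c_j,\,x_j+1\}=c_j$, i.e. $x_j\le c_j-1$, for every $j\in[d]$, and this is exactly the condition $\negx\le\negc-\mathbf{1}$. I do not expect a real obstacle, since the heart of the matter is this coordinatewise maximum computation; the only points needing care are (a) confirming that Theorem \ref{lub} applies to both $T$ and $S$ (it always applies to $S$, which has positive genus because $\negx$ is now a gap), and (b) disposing of the degenerate case $T=\mathbb{N}_0^d$, where $\negc=\mathbf{0}$: there $\negx\le\negc-\mathbf{1}$ is impossible in $\mathbb{N}_0^d$ and the corner of $T\setminus\{\negx\}=\mathbb{N}_0^d\setminus\{\nege_i\}$ is $\nege_i+\mathbf{1}\neq\mathbf{0}$, so both sides of the equivalence fail and the statement still holds. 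Note that minimality of $\negx$ is used only to guarantee that $S$ is a GNS; no stronger hypothesis on $\negx$ enters the argument.
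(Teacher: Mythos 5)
Your proof is correct. It differs from the paper's in a mild but genuine way: the paper proves the forward implication by observing that $\negx$ becomes a gap of $T\setminus\{\negx\}$ and gaps of a GNS with corner $\negc$ lie in $\operatorname{C}(\negc-\mathbf{1})$, and proves the converse by checking that the sets $\nabla_i(T\setminus\{\negx\},\negc)$ contain the nonempty sets $\nabla_i(T,\negc)$ and invoking the characterization of the corner via these sets (Lemma \ref{cornernabla}). You instead bypass the $\nabla_i$ machinery entirely and compute the corner of $T\setminus\{\negx\}$ in closed form from Theorem \ref{lub}: since $\operatorname{H}(T\setminus\{\negx\})=\operatorname{H}(T)\cup\{\negx\}$ and $\lub$ is a coordinatewise maximum, the new corner has $j$-th coordinate $\max\{c_j,x_j+1\}$, and the equivalence is read off directly. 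Both arguments ultimately rest on Theorem \ref{lub} (the paper's Lemma \ref{cornernabla} is itself derived from it), so the substance is the same; what your version buys is a single unified computation that yields both directions at once and, as a small bonus, identifies exactly what the corner becomes when $\negx\not\leq\negc-\mathbf{1}$. Your explicit treatment of the degenerate case $T=\mathbb{N}_0^d$ (where $\operatorname{H}(T)=\emptyset$ and Theorem \ref{lub} does not apply to $T$) is a point of care that the paper's proof also glosses over, so including it is welcome rather than redundant.
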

\begin{proof} If $T\setminus \{\negx\}$ has corner $\negc$, as $\negx\in \operatorname{H}(T\setminus \{\negx\})$, then $\negx\leq \negc-\mathbf{1}$. Conversely, if $\negx\leq \negc-\mathbf{1}$, then $\operatorname{H}(T)\cup \{\negx\}\subset \operatorname{C}(\negc-\mathbf{1})$. Since $\operatorname{H}(T\setminus \{\negx\})=\operatorname{H}(T)\cup \{\negx\}$ and $\nabla_i(T\setminus \{\negx\}, \negc)\supseteq\nabla_i(T, \negc)$ for all $i\in [d]$, and the result follows from Lemma~\ref{cornernabla}.
\end{proof}

\begin{remark} \label{remextensions} The unitary extensions of GNSs with corner $\negc$ given in Propositions \ref{childcorner} and \ref{parentcorner} are inverse procedures to each other in the sense that, for $S$ and $T$ GNSs with corner $\negc$, we have:
	\begin{itemize}
		\item if $\negx \in \operatorname{SG}(S)$, then $\negx$ is a minimal generator of $S\cup \{\negx\}$ with $\negx\leq \negc-\mathbf{1}$;  and
		\item if $\negx$ is a minimal generator of $T$ with $\negx\leq \negc-\mathbf{1}$, then $\negx\in \operatorname{SG}(T\setminus \{\negx\})$.
	\end{itemize}
\end{remark} 

In order to obtain GNSs with a same corner by adding special gaps, motivated by Proposition~\ref{childcorner}, let us consider the following definition.

\begin{definition}
	For $S\subseteq \NN_0^d$ a GNS with corner $\negc$, define 
	$$\operatorname{D}(S):=\{\negx \in \operatorname{SG}(S) \ : \ \nabla_i(S, \negc)\neq \{\negx\} \ \mbox{for all } i\in [d]\}.$$
\end{definition} 

Next, we describe a procedure to obtain all the GNSs in $\mathcal{C}(\negc)$. The main idea is building up GNSs with corner $\negc$ from $\mathcal{O}(\negc)$ (the ordinary GNS with corner $\negc$) through Proposition~\ref{childcorner} by considering unitary extensions $S\cup \{\negx\}$ for elements $\negx\in \operatorname{D}(S)$, where $S$ is a GNS with corner $\negc$. However, this method may provide redundant GNSs in $\mathcal{C}(\negc)$, in the sense that one can occur that a same GNS to be generated more than one time in this way. To avoid this situation, we will consider a special subset of $\operatorname{D}(S)$ as follows.

\begin{definition}
	Let $S\subseteq \NN_0^d$ be a GNS with corner $\negc$ and let $\prec$ be a monomial order. Considering $\operatorname{L}(S):=\{\negx \in S \ : \ \negx\leq \negc-\mathbf{1}\}$, define
	$$\operatorname{D}_{\prec}(S):=\{\negx \in \operatorname{D}(S) \ : \ \negx \prec \negy\ \mbox{for all}\ \negy \in \operatorname{L}(S)\setminus \{\mathbf{0}\}\}.$$
	If $S$ is a nonordinary GNS, we also define the element  $$\operatorname{low}_{\prec}(S):=\min_{\prec}(\operatorname{L}(S)\setminus \{\mathbf{0}\}).$$
\end{definition} 

Notice that, except for $\mathcal{O}(\negc)$, it is always ensured the existence of a such minimal generator $\negx$ in a GNS with corner $\negc$ as in Proposition~\ref{parentcorner}. 

\begin{lemma}
	Let $T\subseteq \NN_0^d$ be a nonordinary GNS with corner $\negc$. Let $\prec$ be a monomial order and $\negx=\operatorname{low}_{\prec}(T)$. Then $T\setminus\{\negx\}$ is a GNS with corner $\negc$.
	\label{xmin}
\end{lemma}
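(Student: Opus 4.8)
The plan is to reduce everything to the two facts already on record: that removing a minimal generator from a GNS again yields a GNS, and Proposition~\ref{parentcorner}, which states that for a minimal generator $\negx$ of $T$ the semigroup $T\setminus\{\negx\}$ has corner $\negc$ precisely when $\negx\leq\negc-\mathbf{1}$. Since $\negx=\operatorname{low}_\prec(T)$ lies in $\operatorname{L}(T)$ by construction, the inequality $\negx\leq\negc-\mathbf{1}$ holds automatically. Hence the whole statement will follow as soon as I verify that $\negx$ is a minimal generator of $T$, i.e. that $\negx\in T^*\setminus(T^*+T^*)$.

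Before that, I would record that $\operatorname{low}_\prec(T)$ is well defined, which is where the nonordinary hypothesis is used: $\operatorname{L}(T)\setminus\{\mathbf{0}\}$ must be nonempty. Indeed, if it were empty, then every nonzero element of $\operatorname{C}(\negc-\mathbf{1})$ would be a gap, so $\operatorname{C}(\negc-\mathbf{1})\setminus\{\mathbf{0}\}\subseteq\operatorname{H}(T)$; combined with Remark~\ref{gaps} (and $\mathbf{0}\in T$) this forces $\operatorname{H}(T)=\operatorname{C}(\negc-\mathbf{1})\setminus\{\mathbf{0}\}$, that is $T=\mathcal{O}(\negc)$, contradicting the assumption that $T$ is nonordinary. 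Thus $\negx=\min_\prec(\operatorname{L}(T)\setminus\{\mathbf{0}\})$ genuinely exists.

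The heart of the argument is showing $\negx$ is a minimal generator, and I would argue by contradiction. Suppose $\negx=\negy+\negz$ with $\negy,\negz\in T^*$. Since $\negy,\negz\geq\mathbf{0}$ and $\negx\leq\negc-\mathbf{1}$, both summands satisfy $\negy\leq\negx\leq\negc-\mathbf{1}$ and $\negz\leq\negx\leq\negc-\mathbf{1}$, so $\negy,\negz\in\operatorname{L}(T)\setminus\{\mathbf{0}\}$. Now the two axioms of a monomial order enter: from $\negz\neq\mathbf{0}$ we get $\mathbf{0}\prec\negz$, and translation invariance then gives $\negy=\mathbf{0}+\negy\prec\negz+\negy=\negx$. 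So $\negy$ is an element of $\operatorname{L}(T)\setminus\{\mathbf{0}\}$ strictly $\prec$-smaller than $\negx$, contradicting the $\prec$-minimality of $\negx=\operatorname{low}_\prec(T)$. Hence no such decomposition exists and $\negx$ is a minimal generator of $T$.

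Finally I would combine the pieces: $\negx$ is a minimal generator of $T$ with $\negx\leq\negc-\mathbf{1}$, so $T\setminus\{\negx\}$ is a GNS by the recalled removal criterion, and it has corner $\negc$ by Proposition~\ref{parentcorner}. The only step I expect to do real work is the conversion of irreducibility into $\prec$-minimality, where the translation-invariance and positivity axioms of $\prec$ are exactly what is needed; the remainder is bookkeeping against the earlier results.
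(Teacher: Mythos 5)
Your proof is correct and follows essentially the same route as the paper's: well-definedness of $\operatorname{low}_{\prec}(T)$ from nonordinarity, then showing $\negx$ is a minimal generator by contradicting $\prec$-minimality via the monomial-order axioms, and concluding with Proposition~\ref{parentcorner}. You merely spell out two steps the paper leaves implicit (why $\operatorname{L}(T)\setminus\{\mathbf{0}\}\neq\emptyset$ and why $\negx_1\prec\negx$), which is fine.
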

\begin{proof}
	Since $T$ is nonordinary, the set $\operatorname{L}(T)\setminus \{\mathbf{0}\}=\{\negz \in T^* \ : \ \negz \leq  \negc-\mathbf{1}\}$ is not empty, and thus the element $\negx$ is well-defined. Notice that $\negx$ is a minimal generator of $T$ since otherwise we could write $\negx=\negx_1+\negx_2$ with $\negx_1,\negx_2\in T^*$, which implies that $\negx_1\prec\negx$, contradicting the minimality of $\negx$ with respect to $\prec$ in $\operatorname{L}(S)\setminus \{\mathbf{0}\}$ because $\negx_1\leq \negx\leq \negc-\mathbf{1}$. Hence, $T\setminus\{\negx\}$ is a GNS that has corner $\negc$ by Proposition~\ref{parentcorner}.
\end{proof}

\begin{lemma}\label{alg}
	Let $S\subseteq \NN_0^d$ be a nonordinary GNS with corner $\negc$ and let $\prec$ be a monomial order. Then there exists a chain $S_{1}\supset S_{2}\supset\cdots \supset S_{n-1} \supset S_{n}$ of GNSs with corner $\negc$ such that:
	\begin{itemize}
		\item $S_1=S$;
		\item $S_{i+1}=S_{i}\setminus \{\operatorname{low}_{\prec}(S_i)\}$ for $i \in [n-1]$; and in particular % in particular $\operatorname{low}_{\prec}(S_i)\in \operatorname{D}(S_{i+1})$;
		\item $S_{n}=\mathcal{O}(\negc)$.
		%\item $(S_{i},S_{i+1})$ is an edge of $\mathcal{G}(\negf)$ for all $i=0,1,\ldots,n-1$.
	\end{itemize}
	\label{path}
\end{lemma}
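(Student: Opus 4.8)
The plan is to build the chain recursively, using Lemma~\ref{xmin} to guarantee that each step stays inside the family of GNSs with corner $\negc$, and then to prove that the recursion terminates at $\mathcal{O}(\negc)$ by a genus (equivalently, gap-count) argument.

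First I would set $S_1 = S$ and proceed by recursion. Suppose $S_i$ has been defined and is a nonordinary GNS with corner $\negc$. Then $\operatorname{low}_{\prec}(S_i) = \min_{\prec}(\operatorname{L}(S_i) \setminus \{\mathbf{0}\})$ is well-defined, and Lemma~\ref{xmin} applies to give that
$$
S_{i+1} := S_i \setminus \{\operatorname{low}_{\prec}(S_i)\}
$$
is again a GNS with corner $\negc$, and clearly $S_{i+1} \subsetneq S_i$. So as long as the current semigroup is nonordinary, the recursion can continue and produces a strictly decreasing chain of GNSs all having corner $\negc$, exactly as required by the second bullet of the statement.

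The next step is to show that this recursion halts. Each passage from $S_i$ to $S_{i+1}$ removes a single element of $S_i$, which becomes a new gap, so $\operatorname{g}(S_{i+1}) = \operatorname{g}(S_i) + 1$. On the other hand, every $S_i$ has corner $\negc$, so by Remark~\ref{gaps} we have $\operatorname{H}(S_i) \subseteq \operatorname{C}(\negc - \mathbf{1})$, whence $\operatorname{g}(S_i) \leq \prod_{j=1}^{d} c_j - 1$. Thus the genera form a strictly increasing sequence of integers bounded above by a fixed constant depending only on $\negc$, so the recursion must stop after finitely many steps at some $S_n$.

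Finally I would identify the terminal element. The recursion can only stop because $S_n$ fails to be nonordinary (otherwise Lemma~\ref{xmin} would let it continue), so $S_n$ is an ordinary GNS with corner $\negc$. By Lemma~\ref{lemma corner ordinary}, an ordinary GNS $\{\mathbf{0}\} \cup (\mathbb{N}_0^d \setminus \operatorname{C}(\negs))$ has corner $\negs + \mathbf{1}$, so having corner $\negc$ forces $\negs = \negc - \mathbf{1}$; hence the ordinary GNS with corner $\negc$ is unique and equals $\mathcal{O}(\negc)$, giving $S_n = \mathcal{O}(\negc)$. The argument is largely bookkeeping, with the only genuine point being termination; I expect the main (minor) obstacle to be making the halting condition precise, namely that the process stops exactly when an ordinary semigroup is reached, together with invoking the uniqueness of the ordinary GNS with a prescribed corner to conclude $S_n = \mathcal{O}(\negc)$.
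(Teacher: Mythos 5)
Your proposal is correct and follows essentially the same route as the paper: iterate Lemma~\ref{xmin} to build the strictly decreasing chain, observe that the process must terminate, and note that it can only halt at the ordinary GNS with corner $\negc$. Your termination argument (strictly increasing genus bounded by $|\negc|-1$) is just a rephrasing of the paper's appeal to the finiteness of $\operatorname{L}(S)$, and your explicit identification of the terminal element via the uniqueness of the ordinary GNS with corner $\negc$ (Lemma~\ref{lemma corner ordinary}) is a slightly more careful version of a step the paper leaves implicit.
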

\begin{proof}
	For $S_{1}=S$, it follows from Lemma~\ref{xmin} that $S_1\setminus \{\negx_1\}$ has corner $\negc$, for $\negx_1 =\operatorname{low}_{\prec}(S_1)$. Putting $S_{2}=S_1\setminus \{\negx_1\}$, if $S_2$ is ordinary we conclude the procedure, and otherwise we consider $S_3=S_2\setminus \{\negx_2\}$ with corner $\negc$, where $\negx_2 =\operatorname{low}_{\prec}(S_2)$, by Lemma~\ref{xmin}. Repeating this argument for each $i\geq 2$, we obtain a GNS  $S_{i+1}=S_{i}\setminus \{\negx_i\}$ with corner $\negc$, where $\negx_i=\operatorname{low}_{\prec}(S_i)$. The procedure stops when it reaches $S_{i}=\mathcal{O}(\negc)$ for some $i$ (and it occurs because $\operatorname{L}(S)$ is finite).
\end{proof}

\begin{remark}\label{remlow}
	It is worth to mention that, in the previous result, the element $\operatorname{low}_{\prec}(S_i)\in \operatorname{D}_\prec(S_{i+1})$ for each $i$. Indeed, as observed in Remark~\ref{remextensions},  $\operatorname{low}_{\prec}(S_i)\in \operatorname{SG}(S_{i+1})$. Furthermore, we get $\nabla_j(S_{i+1}, \negc)\neq \{\operatorname{low}_{\prec}(S_i)\}$ for all $j\in [d]$,  by using that $S_i=S_{i+1}\cup\{\operatorname{low}_{\prec}(S_i)\}$ has corner $\negc$ in Proposition~\ref{childcorner}. Hence, we have  $\operatorname{low}_{\prec}(S_i)\in \operatorname{D}(S_{i+1})$. As $\operatorname{low}_{\prec}(S_i)=\min_{\prec}(\operatorname{L}(S_{i})\setminus\{\mathbf{0}\})$, we get $\operatorname{low}_{\prec}(S_i)\prec \negy$ for all $\negy\in \operatorname{L}(S_{i+1})\setminus\{\mathbf{0}\}$ because $\operatorname{L}(S_{i})\supset\operatorname{L}(S_{i+1})$.
\end{remark}

As a consequence, gathering the conclusions of Proposition~\ref{cotainfgc}, Corollary~\ref{irreducible} and Lemma~\ref{alg}, we obtain the distribution of genera of GNSs with prescribed corner.

\begin{corollary}
		Given a pair $(g, \negc)\in \mathbb{N}\times (\NN^d_0\setminus \operatorname{C}(\mathbf{1}))$, there is a GNS with corner $\negc$ and genus $g$ if, and only if, 
		$$\textstyle \left\lceil \frac{c_1}{2}\right\rceil+\cdots+\left\lceil \frac{c_d}{2}\right\rceil\leq g\leq |\negc|-1.$$
\end{corollary}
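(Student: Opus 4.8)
The plan is to prove the two displayed inequalities as \emph{necessary} conditions and then to establish \emph{sufficiency} by realizing every intermediate value of the genus along a single descending chain of GNSs, all of which share the corner $\negc$. Throughout I work under the hypothesis that each $c_i\geq 2$, which is the range in which the cited bounds were established; coordinates equal to $1$ are absorbed by the dimension-reduction described in the remark closing Section~4, which preserves both the corner and the genus.

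For necessity, let $S$ be any GNS with corner $\negc$ and genus $g$. The upper bound $g\leq |\negc|-1$ is exactly Proposition~\ref{cotainfgc}, since $|\negc|=\prod_{i=1}^d c_i$. The lower bound $\sum_{i=1}^d\lceil c_i/2\rceil\leq g$ is precisely the ``least possible genus'' assertion proved in Corollary~\ref{irreducible}. Together these show that no value of $g$ outside the stated interval can be the genus of a GNS with corner $\negc$.

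For sufficiency, the key point is that both endpoints of the interval are attained and that the genus moves between them in steps of exactly $1$. Corollary~\ref{irreducible} produces a GNS $T$ with corner $\negc$ and $\operatorname{g}(T)=\sum_{i=1}^d\lceil c_i/2\rceil$, the minimal value; at the opposite extreme, the ordinary GNS $\mathcal{O}(\negc)$ has corner $\negc$ and, by Proposition~\ref{prop produto corner ordinary}, genus $|\negc|-1$, the maximal value. I would then invoke Lemma~\ref{alg} with $S=T$, which yields a chain
$$T=S_1\supset S_2\supset\cdots\supset S_n=\mathcal{O}(\negc)$$
of GNSs all having corner $\negc$, with $S_{i+1}=S_i\setminus\{\operatorname{low}_\prec(S_i)\}$. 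Since $\operatorname{low}_\prec(S_i)$ is a minimal generator of $S_i$ (as shown in the proof of Lemma~\ref{xmin}), deleting it adjoins exactly one element to the set of gaps, so $\operatorname{g}(S_{i+1})=\operatorname{g}(S_i)+1$. Hence the genera of $S_1,\ldots,S_n$ run through \emph{every} integer from $\operatorname{g}(T)=\sum_{i=1}^d\lceil c_i/2\rceil$ up to $\operatorname{g}(\mathcal{O}(\negc))=|\negc|-1$, skipping none; so for each $g$ in the interval some $S_i$ provides a GNS with corner $\negc$ and genus $g$.

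The main obstacle is this sufficiency half, and specifically the verification that the chain visits \emph{consecutive} genera, since that is what guarantees no value of the interval is missed. This rests on two facts already in hand: that each link of the chain of Lemma~\ref{alg} removes a single minimal generator (so the genus rises by precisely one at every step), and that the chain begins at the minimal-genus semigroup $T$ and terminates exactly at $\mathcal{O}(\negc)$, whose genus $|\negc|-1$ is the maximum permitted by Proposition~\ref{cotainfgc}. Once these are matched to the two bounds, assembling Proposition~\ref{cotainfgc}, Corollary~\ref{irreducible}, and Lemma~\ref{alg} completes the argument.
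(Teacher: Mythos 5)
Your proof is correct and takes essentially the same route as the paper, which derives this corollary in a single line by ``gathering the conclusions of Proposition~\ref{cotainfgc}, Corollary~\ref{irreducible} and Lemma~\ref{alg}'' --- precisely the assembly you carry out (necessity from the two bounds, sufficiency from the descending chain). The one detail you make explicit that the paper leaves implicit is that each link of the chain removes a single minimal generator and hence raises the genus by exactly one, which is indeed the reason every value in the interval is attained.
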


In order to provide a procedure that gives all GNSs having fixed corner $\negc$, without repetitions of GNSs, let us consider the following definition.

\begin{definition}
	Let $\prec$ be a monomial order, $\negc\in \NN^d$ and let $\mathcal{C}(\negc)$ be the set of GNSs having corner $\negc$. Define $\mathcal{G}_{\prec}(\negc)=(\mathcal{C}(\negc), \mathcal{E}_{\prec})$ to be the graph whose set of vertices is $\mathcal{C}(\negc)$ and the set of edges is  $\mathcal{E}_{\prec}:=\{(T,S)\in \mathcal{C}(\negc)\times \mathcal{C}(\negc) \ : \ S=T\setminus \{\operatorname{low}_{\prec}(T)\}\}$. 
	If $(T,S)\in \mathcal{E}_{\prec}$, $T$ is called a \emph{child} of $S$. 
	\label{defTree}
\end{definition}

The following result shows us it is possible to arrange GNSs having fixed corner into a rooted tree.

\begin{theorem} \label{thmTree}
	Let $\prec$ be a monomial order and $\negc\in \NN^d$. Then $\mathcal{G}_{\prec}(\negc)$ is a rooted tree whose root is $\mathcal{O}(\negc)$ and the children of $S\in \mathcal{C}(\negc)$ are  $S\cup \{\negx\}$, where $\negx \in \operatorname{D}_{\prec}(S)$.
\end{theorem}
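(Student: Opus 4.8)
The plan is to verify that $\mathcal{G}_{\prec}(\negc)$ satisfies the defining properties of a rooted tree and then to identify the children of each vertex. I would organize the argument around the \emph{parent map} $T\mapsto T\setminus\{\operatorname{low}_{\prec}(T)\}$ defined on the nonordinary GNSs with corner $\negc$, exploiting the fact that passing to a parent strictly increases the genus.

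First I would record that $\mathcal{C}(\negc)$ is finite: every $S\in \mathcal{C}(\negc)$ satisfies $\operatorname{H}(S)\subseteq \operatorname{C}(\negc-\mathbf{1})$ by Remark \ref{gaps}, and there are only finitely many subsets of the finite set $\operatorname{C}(\negc-\mathbf{1})$. Next I would check that every vertex other than $\mathcal{O}(\negc)$ has exactly one parent. Indeed, $\mathcal{O}(\negc)$ is the unique ordinary GNS with corner $\negc$ (by Lemma \ref{lemma corner ordinary} the only admissible $\negs$ is $\negc-\mathbf{1}$), so for any $T\in \mathcal{C}(\negc)\setminus\{\mathcal{O}(\negc)\}$ the element $\operatorname{low}_{\prec}(T)$ is well defined and, by Lemma \ref{xmin}, $S:=T\setminus\{\operatorname{low}_{\prec}(T)\}$ is the unique vertex with $(T,S)\in \mathcal{E}_{\prec}$. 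Since $\operatorname{low}_{\prec}$ is undefined on the ordinary GNS, $\mathcal{O}(\negc)$ occurs as no first coordinate of an edge, i.e.\ it has no parent. Counting edges by their child, this gives $|\mathcal{E}_{\prec}|=|\mathcal{C}(\negc)|-1$.

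To conclude the tree property I would prove connectivity using Lemma \ref{alg}: for each nonordinary $S\in \mathcal{C}(\negc)$ the chain $S=S_1\supset S_2\supset\cdots\supset S_n=\mathcal{O}(\negc)$ with $S_{i+1}=S_i\setminus\{\operatorname{low}_{\prec}(S_i)\}$ is precisely a path from $S$ to $\mathcal{O}(\negc)$ in $\mathcal{G}_{\prec}(\negc)$, so the graph is connected. A connected graph on $|\mathcal{C}(\negc)|$ vertices with $|\mathcal{C}(\negc)|-1$ edges is a tree, and declaring $\mathcal{O}(\negc)$ the root makes it a rooted tree; acyclicity can alternatively be seen directly from the fact that $\operatorname{g}(T\setminus\{\operatorname{low}_{\prec}(T)\})=\operatorname{g}(T)+1$, so the genus strictly increases along the parent direction and no cycle can close up.

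Finally I would characterize the children of a fixed $S\in \mathcal{C}(\negc)$. For the forward inclusion, a child $T$ satisfies $S=T\setminus\{\operatorname{low}_{\prec}(T)\}$, so $T=S\cup\{\negx\}$ with $\negx=\operatorname{low}_{\prec}(T)$, and Remark \ref{remlow} yields $\negx\in \operatorname{D}_{\prec}(S)$. For the reverse inclusion, given $\negx\in \operatorname{D}_{\prec}(S)$ I would first note $\negx\in \operatorname{SG}(S)$ so that $S\cup\{\negx\}$ is a GNS, and then apply Proposition \ref{childcorner} (using $\nabla_i(S,\negc)\neq\{\negx\}$ for all $i$) to get that $T:=S\cup\{\negx\}$ has corner $\negc$. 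The crucial point, which I expect to be the main obstacle, is verifying $\operatorname{low}_{\prec}(T)=\negx$: here one uses that $\negx\leq \negc-\mathbf{1}$ gives $\operatorname{L}(T)\setminus\{\mathbf{0}\}=(\operatorname{L}(S)\setminus\{\mathbf{0}\})\cup\{\negx\}$, and the defining property of $\operatorname{D}_{\prec}(S)$, namely $\negx\prec \negy$ for all $\negy\in \operatorname{L}(S)\setminus\{\mathbf{0}\}$, forces $\negx$ to be the $\prec$-minimum of this set. This equality is exactly what guarantees $(T,S)\in \mathcal{E}_{\prec}$ and, simultaneously, that the same $T$ cannot be produced from a different parent, which is why the restriction to $\operatorname{D}_{\prec}(S)$ rather than to $\operatorname{D}(S)$ is essential.
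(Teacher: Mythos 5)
Your proof is correct and follows essentially the same route as the paper's: connectivity comes from Lemma~\ref{alg}, acyclicity from the fact that each nonordinary vertex has exactly one parent $T\setminus\{\operatorname{low}_{\prec}(T)\}$, and the children are identified via Remark~\ref{remlow}. The only differences are cosmetic --- you establish the tree property by edge counting (or genus monotonicity) where the paper argues by uniqueness of paths to the root, and you additionally spell out the reverse inclusion, namely that every $\negx\in \operatorname{D}_{\prec}(S)$ does yield a child by checking $\operatorname{low}_{\prec}(S\cup\{\negx\})=\negx$, a verification the paper leaves implicit.
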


\begin{proof}
	Given $S\in \mathcal{C}(\negc)$, it follows from Lemma~\ref{path} that there exists a chain of GNSs $S_{1}\supset S_{2}\supset\cdots \supset S_{n-1} \supset S_{n}$ such that $S_1=S$,  $S_{i+1}=S_{i}\setminus \{\operatorname{low}_{\prec}(S_i)\}$ and $S_n=\mathcal{O}(\negc)$. In particular, $(S_1,S_2),(S_2,S_3),\ldots,(S_{n-1},S_n)$ is a path of edges of $\mathcal{G}_{\prec}(\negc)$ linking $S$ to $\mathcal{O}(\negc)$. If other path of edges of $\mathcal{G}_{\prec}(\negc)$ links $S$ to $\mathcal{O}(\negc)$, then for some $i\in [n]$ there are two different GNSs $T_1,T_2 \in \mathcal{C}(\negc)$ such that $(S_i,T_1),(S_i,T_2)\in \mathcal{E}_{\prec}$. By the definition of $\mathcal{G}_{\prec}(\negc)$, we have $T_1=S_i\setminus \{\operatorname{low}_{\prec}(S_i)\}=T_2$, which contradicts $T_1\neq T_2$. Hence, we conclude that $\mathcal{G}_{\prec}(\negc)$ is a rooted tree whose root is the vertex $\mathcal{O}(\negc)$. Now, if $T$ is a child of $S$, then $S=T\setminus\{\operatorname{low}_{\prec}(T)\}$, and therefore $T= S\cup \{\operatorname{low}_{\prec}(T)\}$. In particular, following the same idea of Remark~\ref{remlow}, we obtain that $\operatorname{low}_{\prec}(T)\in \operatorname{D}_{\prec}(S)$, which proves the result.
\end{proof}

Observe that different monomial orders $\prec_1$ and $\prec_2$ in $\mathbb{N}_0^d$ may provide different trees $\mathcal{G}_{\prec_1}(\negc)$ and $\mathcal{G}_{\prec_2}(\negc)$, although both sets of vertices are the same.

\begin{example}\label{figures}
	Given $\negc=(3,2)$, let us compute the rooted tree $\mathcal{G}_{\prec}(\negc)$ of GNSs having corner $\negc$ by considering the lexicographic order. Since a such GNS $S$ is entirely described by the set $\operatorname{L}(S)$, we shall use those sets in the Figure 1 to illustrate the GNSs in the rooted tree. In Figure 1, the elements of an $S\in \mathcal{C}(\negc)$ are denoted by the black points and those of $\operatorname{H}(S)$ are denoted by the red ones.
\end{example}

	\begin{figure}[h] \label{fig1}
		\begin{center}
			\begin{tikzpicture}[sibling distance=10em]
			\node {\begin{tikzpicture}[scale=.5]
				\draw [mark=*, color=red] plot (1,0);
				\draw [mark=*, color=red] plot (0,1);
				\draw [mark=*] plot (0,0);
				\draw [mark=*, color=red] plot (2,0);
				\draw [mark=*, color=red] plot (1,1);
				\draw [mark=*, color=red] plot (2,1);
				\end{tikzpicture}} [sibling distance=4.2cm]
			child { node {\begin{tikzpicture}[scale=.5]
					\draw [mark=*, color=red] plot (1,0);
					\draw [mark=*] plot (0,1);
					\draw [mark=*] plot (0,0);
					\draw [mark=*, color=red] plot (2,0);
					\draw [mark=*, color=red] plot (1,1);
					\draw [mark=*, color=red] plot (2,1);
					\end{tikzpicture}} 
			}
			child { node {\begin{tikzpicture}[scale=.5]
					\draw [mark=*, color=red] plot (1,0);
					\draw [mark=*, color=red] plot (0,1);
					\draw [mark=*] plot (0,0);
					\draw [mark=*, color=red] plot (2,0);
					\draw [mark=*] plot (1,1);
					\draw [mark=*, color=red] plot (2,1);
					\end{tikzpicture}} 
				child { node {\begin{tikzpicture}[scale=.5]
						\draw [mark=*, color=red] plot (1,0);
						\draw [mark=*] plot (0,1);
						\draw [mark=*] plot (0,0);
						\draw [mark=*, color=red] plot (2,0);
						\draw [mark=*] plot (1,1);
						\draw [mark=*, color=red] plot (2,1);
						\end{tikzpicture}} }}
			child { node {\begin{tikzpicture}[scale=.5]
					\draw [mark=*, color=red] plot (1,0);
					\draw [mark=*, color=red] plot (0,1);
					\draw [mark=*] plot (0,0);
					\draw [mark=*] plot (2,0);
					\draw [mark=*, color=red] plot (1,1);
					\draw [mark=*, color=red] plot (2,1);
					\end{tikzpicture}} [sibling distance=2.2cm]
				child { node {\begin{tikzpicture}[scale=.5]
						\draw [mark=*] plot (1,0);
						\draw [mark=*, color=red] plot (0,1);
						\draw [mark=*] plot (0,0);
						\draw [mark=*] plot (2,0);
						\draw [mark=*, color=red] plot (1,1);
						\draw [mark=*, color=red] plot (2,1);
						\end{tikzpicture}} }
				child { node {\begin{tikzpicture}[scale=.5]
						\draw [mark=*, color=red] plot (1,0);
						\draw [mark=*, color=red] plot (0,1);
						\draw [mark=*] plot (0,0);
						\draw [mark=*] plot (2,0);
						\draw [mark=*] plot (1,1);
						\draw [mark=*, color=red] plot (2,1);
						\end{tikzpicture}} }}
			child { node {\begin{tikzpicture}[scale=.5]
					\draw [mark=*, color=red] plot (1,0);
					\draw [mark=*, color=red] plot (0,1);
					\draw [mark=*] plot (0,0);
					\draw [mark=*, color=red] plot (2,0);
					\draw [mark=*, color=red] plot (1,1);
					\draw [mark=*] plot (2,1);
					\end{tikzpicture}} [sibling distance=2.2cm]
				child { node {\begin{tikzpicture}[scale=.5]
						\draw [mark=*, color=red] plot (1,0);
						\draw [mark=*] plot (0,1);
						\draw [mark=*] plot (0,0);
						\draw [mark=*, color=red] plot (2,0);
						\draw [mark=*, color=red] plot (1,1);
						\draw [mark=*] plot (2,1);
						\end{tikzpicture}} }
				child { node {\begin{tikzpicture}[scale=.5]
						\draw [mark=*, color=red] plot (1,0);
						\draw [mark=*, color=red] plot (0,1);
						\draw [mark=*] plot (0,0);
						\draw [mark=*, color=red] plot (2,0);
						\draw [mark=*] plot (1,1);
						\draw [mark=*] plot (2,1);
						\end{tikzpicture}} } };
		\end{tikzpicture}
	\end{center}
	\caption{The tree of GNSs in $\mathbb{N}_0^2$ with fixed corner $\negc=(3,2)$, with respect to the lexicographical order, of Example~\ref{figures}.}
\end{figure}
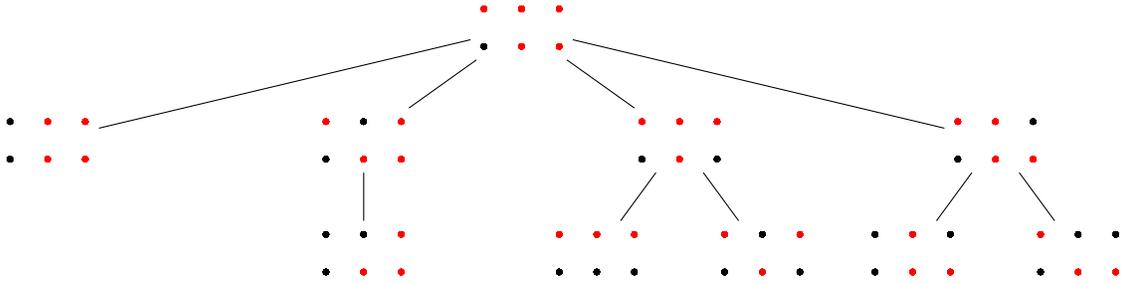

Hence, we have a procedure that computes all GNSs with corner $\negc$, without repetitions, which relies on Theorem \ref{thmTree}. It is presented in Algorithm 1 as follows.

\begin{algorithm}
	\caption{Algorithm for computing the set $\mathcal{C}(\negc)$ of all GNSs with fixed corner $\negc$}
	\begin{algorithmic} 
		\REQUIRE $\negc\in \mathbb{N}^d\setminus \{\mathbf{1}\}$ and a monomial order $\prec$ in $\mathbb{N}_0^d$.
		\ENSURE the set $\mathcal{C}(\negc)$ of all GNSs with fixed corner $\negc$.
		\STATE Compute the ordinary GNS $\mathcal{O}(\negc)$.
		\STATE Set $\mathcal{C}(\negc)=\{\mathcal{O}(\negc)\}$.
		\STATE $n=1$.	
		\STATE Set $\mathcal{I}[n]=\emptyset$.
		\FOR{ $\negx\in \operatorname{SG}(\mathcal{O}(\negc))$}
		\STATE Append $\mathcal{O}(\negc)\cup \{\negx\}$ to $\mathcal{I}[1]$.
		\STATE Append $\mathcal{O}(\negc)\cup \{\negx\}$ to $\mathcal{C}(\negc)$.
		\ENDFOR
		\WHILE{$\mathcal{I}[n]\neq \emptyset$}
		\STATE $\mathcal{I}[n+1]=\emptyset$.
		\FOR{ $S\in \mathcal{I}[n]$}
		\STATE Compute $\operatorname{D}_{\prec}(S)$.
		\FOR { $\negx\in \operatorname{D}_{\prec}(S)$}
		\STATE Append $S\cup \{\negx\}$ to $\mathcal{C}(\negc)$.
		\STATE Append $S\cup \{\negx\}$ to $\mathcal{I}[n+1]$.
		\ENDFOR
		\ENDFOR
		
		$n\leftarrow n+1$.
		\ENDWHILE
	\end{algorithmic}
\end{algorithm}

\section{Bounds on the number of GNS with fixed corner}

In this section, we provide lower and upper bounds on the number of GNSs having fixed corner. Since the notions of conductor and corner coincide for $d=1$, the well known bounds due to Backelin \cite{backelin} for the number of numerical semigroups with fixed Frobenius number give us naturally the following bounds for the number $N(c)$ of numerical semigroups with corner $c$ as
$$2^{\lfloor\frac{c-2}{2}\rfloor}\leq N(c)\leq 4\cdot 2^{\lfloor\frac{c-2}{2}\rfloor}.$$
As the lower bound above comes up from the observation that every subset $A$ of $\{n\in \mathbb{N} \ : \ \lceil\frac{c}{2}\rceil\leq n < c-1\}$ provides a numerical semigroup with conductor $c$ by considering $A\cup \mathcal{O}(c)$, where $\mathcal{O}(c)=\{0, c, c+1,...\}$ is the ordinary numerical semigroup with conductor $c$, we will employ a generalization of this idea to give a lower bound for the number of GNSs in $\mathbb{N}_0^d$, with $d\geq 2$, having fixed corner $\negc\in \mathbb{N}_0^d\setminus \operatorname{C}(\mathbf{1})$. 

\subsection{A lower bound on the number of GNSs with fixed corner}

Let $\mathcal{P}_d$ be the power set of $[d]$. For any $J \in \mathcal{P}_d$ and $\negy=(y_1,\ldots, y_d)\in \mathbb{N}_0^d\setminus \operatorname{C}(\mathbf{1})$, we define
\begin{center}
	$
	\Omega_{J}(\negy):= \big\{ \negx \in \mathbb{N}_0^d \ : \ \big\lceil \frac{y_{j}}{2} \big\rceil  \leq x_{j} \leq  y_{j} - 1 \mbox{ for } j \in J \mbox{ and } x_i < \big\lceil \frac{y_i}{2} \big\rceil \mbox{ for } i \in [d] \setminus J  \big\}
	$.
\end{center}

\begin{remark} \label{remark Omega_i}
	Observe that for any two distinct $J, J'\in \mathcal{P}_d$, the sets $\Omega_{J}(\negy)$ and $\Omega_{J'}(\negy)$ are disjoint. Furthermore,  these sets $\Omega_{J}(\negy)$ split the region $\operatorname{C}(\negy-\mathbf{1})$ of $\mathbb{N}_0^d$ into $2^d$ disjoint subsets. The Figures 3 and 4 illustrate such decomposition in $\mathbb{N}_0^2$ and $\mathbb{N}_0^3$:
	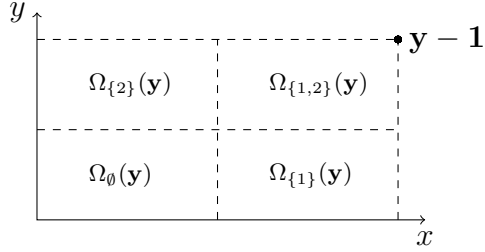
\begin{figure}[htp]
		
		\begin{tikzpicture}[scale=0.6] %[scale=0.5, >=latex, font=\footnotesize]
		\draw [<->] (0,4.6) node [left] {$y$} -- (0,0)
		-- (8.6,0) node [below] {$x$};

		\draw (.9,1) node [right] {{\scriptsize $\Omega_{\emptyset}(\negy)$}};
		\draw (4.9,1) node [right] {{\scriptsize $\Omega_{\{1\}}(\negy)$}};
		\draw (.9,3) node [right] {{\scriptsize $\Omega_{\{2\}}(\negy)$}};
		\draw (4.9,3) node [right] {{\scriptsize $\Omega_{\{1,2\}}(\negy)$}};
		\draw [mark=*, color=black] plot (8,4) node [right] {$\negy-\mathbf{1}$};
		\draw[dashed] (8,0) |- (8,4);
		\draw[dashed] (4,0) |- (4,4);
		\draw[dashed] (0,4) |- (8,4);
		\draw[dashed] (0,2) |- (8,2);
		\end{tikzpicture}
		\caption{The decomposition of a region $\operatorname{C}(\negy-\mathbf{1})$ in $\mathbb{N}_0^2$ into the $4$ disjoint subsets $\Omega_{\emptyset}(\negy)$, $\Omega_{\{1\}}(\negy)$, $\Omega_{\{2\}}(\negy)$, and $\Omega_{\{1,2\}}(\negy)$.}
	\end{figure}
	\label{exaf}

	\begin{figure}
		\centering
		\begin{tikzpicture}[x=1cm,y=0.5cm,z=0.3cm,>=stealth, scale=.6]
		% The axes
		\draw[->] (xyz cs:x=0) -- (xyz cs:x=13) node[above] {$y$};
		\draw[->] (xyz cs:y=0) -- (xyz cs:y=12) node[right] {$z$};
		\draw[->] (xyz cs:z=0) -- (xyz cs:z=-12) node[above] {$x$};
		% The thin ticks
		
		% Dashed lines for the points P, Q
		\draw[dashed] 
		(xyz cs:z=-10) -- 
		+(0,10) coordinate (u) -- 
		(xyz cs:y=10) -- 
		+(12,0) -- 
		++(xyz cs:x=13,z=-10) coordinate (v) --
		+(0,-10) coordinate (w) --
		cycle;
		\draw[dashed] (u) -- (v);
		\draw[dashed] (12,0) |- (12,10);
		\draw[dashed] (12,0) -- (w);
		\draw[dashed] (xyz cs:z=-5) -- +(0,10);
		\draw[dashed] (xyz cs:x=6.5,z=-10) -- +(0,10);
		\draw[dashed] (xyz cs:x=12.55, z=-5) -- +(0,10);
		\draw[dashed] (xyz cs:x=6.2, z=-5) -- +(0,10);
		\draw[dashed] (xyz cs:x=0,z=-5) -- +(12.5,0);
		\draw[dashed] (xyz cs:x=0,z=-5, y=10) -- +(12.5,0);

		\draw[dashed] (6,0) |- (6,10);
		
		\draw[dashed] (xyz cs:x=6,z=0) -- (xyz cs:x=6.5,z=-10);
		
		\draw[dashed] (xyz cs:x=6,z=0, y=5) -- (xyz cs:x=6.5,z=-10, y=5);
		
		\draw[dashed] (xyz cs:x=6,z=0, y=10) -- (xyz cs:x=6.5,z=-10, y=10);
		
		\draw[dashed] (xyz cs:x=0,z=0, y=5) -- (xyz cs:x=0,z=-10, y=5);
		
		\draw[dashed] (xyz cs:x=12,z=0, y=5) -- (xyz cs:x=13,z=-10, y=5);
		
		\draw[dashed] (xyz cs:x=0,z=0, y=5) -- (xyz cs:x=12,z=0, y=5);
		
		\draw[dashed] (xyz cs:x=0,z=-5, y=5) -- (xyz cs:x=12.55,z=-5, y=5);
		
		\draw[dashed] (xyz cs:x=0,z=-10, y=5) -- (xyz cs:x=13,z=-10, y=5);
		
		% Dots and labels for P, Q
		\node[fill,circle,inner sep=1.5pt, label={below:$\negy-\mathbf{1}$}] at (v) {};
		%\node[fill,circle,inner sep=1.5pt] at (12,10) {};
		% The origin
		%\node[align=center] at (3,-3) (ori) {(0,0,0)\\\text{origin}};
		%\draw[->,help lines,shorten >=3pt] (ori) .. controls (1,-2) and (1.2,-1.5) .. (0,0,0);
		\end{tikzpicture}
		\caption{The region $\operatorname{C}(\negy-\mathbf{1})\subset\mathbb{N}_0^3$ splits into the $8$ disjoint sets $\Omega_{\emptyset}(\negy)$, $\Omega_{\{1\}}(\negy)$, $\Omega_{\{2\}}(\negy)$, $\Omega_{\{3\}}(\negy)$, $\Omega_{\{1,2\}}(\negy)$, $\Omega_{\{1,3\}}(\negy)$, $\Omega_{\{2,3\}}(\negy)$, and $\Omega_{\{1,2,3\}}(\negy)$.}
	\end{figure}
\end{remark}

\begin{proposition}\label{cornerunionordinary}
	Let $d \geq 2$, $\negc = (c_1,\ldots,c_d) \in \mathbb{N}^d$,  with $c_j > 1$ for all $j$ and let $J \in \mathcal{P}_d$ be a nonempty set. Then, for any subset $A \subseteq \Omega_{J}(\negc)$,
	$$
	S = A \cup \mathcal{O}(\negc)
	$$
	is a GNS in $\mathbb{N}_0^d$ with corner $\negc$.
\end{proposition}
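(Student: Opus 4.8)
The plan is to verify directly that $S = A \cup \mathcal{O}(\negc)$ is a submonoid of $\mathbb{N}_0^d$ with finite complement, and then to pin down its corner via Lemma~\ref{cornernabla}. Recall from Lemma~\ref{lemma corner ordinary} that $\mathcal{O}(\negc)=\{\mathbf{0}\}\cup(\mathbb{N}_0^d\setminus \operatorname{C}(\negc-\mathbf{1}))$, so that a nonzero element of $\mathcal{O}(\negc)$ is precisely one having some coordinate $\geq c_k$. Since $J\neq\emptyset$ and each $c_j\geq 2$, every $\negx\in\Omega_J(\negc)$ satisfies $x_j\geq \lceil c_j/2\rceil\geq 1$ for $j\in J$; in particular $\Omega_J(\negc)\subseteq \operatorname{C}(\negc-\mathbf{1})\setminus\{\mathbf{0}\}$, so $A$ is disjoint from $\mathcal{O}(\negc)$ and $\mathbf{0}\notin A$.

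First I would check closure under addition. The only nontrivial case is a sum $\negx+\negy$ of two nonzero elements. If one of them, say $\negx$, lies in $\mathcal{O}(\negc)$, then $x_a\geq c_a$ for some $a$, whence $(\negx+\negy)_a\geq c_a$ and $\negx+\negy\in\mathcal{O}(\negc)\subseteq S$. If instead $\negx,\negy\in A\subseteq\Omega_J(\negc)$, I would fix any $j\in J$ and use the defining inequalities $x_j,y_j\geq\lceil c_j/2\rceil$ to get $(\negx+\negy)_j=x_j+y_j\geq 2\lceil c_j/2\rceil\geq c_j$, so again $\negx+\negy\in\mathcal{O}(\negc)\subseteq S$. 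Together with $\mathbf{0}\in\mathcal{O}(\negc)\subseteq S$, this shows $S$ is a submonoid; and since $\operatorname{H}(S)\subseteq\operatorname{H}(\mathcal{O}(\negc))=\operatorname{C}(\negc-\mathbf{1})\setminus\{\mathbf{0}\}$ is finite, $S$ is a GNS. This is the step where the threshold $\lceil c_j/2\rceil$ in the definition of $\Omega_J(\negc)$ does all the work, and I expect it to be routine once that observation is isolated.

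It remains to show the corner of $S$ is $\negc$. Since $\operatorname{H}(S)\subseteq\operatorname{C}(\negc-\mathbf{1})$, by Lemma~\ref{cornernabla} it suffices to exhibit, for each $i\in[d]$, a gap $\negh$ with $h_i=c_i-1$; because $A\subseteq\Omega_J(\negc)$, any point of $\operatorname{C}(\negc-\mathbf{1})\setminus\{\mathbf{0}\}$ lying outside $\Omega_J(\negc)$ is automatically such a gap. If $i\notin J$, then the point $(c_i-1)\nege_i$ already fails the defining condition $x_i<\lceil c_i/2\rceil$ of $\Omega_J(\negc)$ (as $c_i-1\geq\lceil c_i/2\rceil$ for $c_i\geq 2$), so it is a gap. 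If $i\in J$, I would use $d\geq 2$ to select an auxiliary index $i'\neq i$ and build a point with $i$-th coordinate $c_i-1$ that violates the constraint at $i'$: taking its $i'$-th coordinate to be $0$ when $i'\in J$, or $c_{i'}-1$ when $i'\notin J$ (and all remaining coordinates $0$), forces the point out of $\Omega_J(\negc)$ while keeping it in $\operatorname{C}(\negc-\mathbf{1})\setminus\{\mathbf{0}\}$ with $i$-th coordinate $c_i-1$. Hence $\nabla_i(S,\negc)\neq\emptyset$ for every $i$, and Lemma~\ref{cornernabla} yields that $\negc$ is the corner of $S$.

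The main obstacle is the corner computation in the case $i\in J$: the obvious candidate gap $(c_i-1)\nege_i$ may lie in $\Omega_J(\negc)$ (indeed it does exactly when $J=\{i\}$) and could therefore belong to $A$, so it need not be a gap. The hypothesis $d\geq 2$ is exactly what lets me perturb a second coordinate to push the point outside $\Omega_J(\negc)$ and recover a guaranteed gap; everything else is bookkeeping with the inequalities defining $\Omega_J(\negc)$.
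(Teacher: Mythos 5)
Your proof is correct and follows essentially the same route as the paper: closure under addition via $x_j+y_j\geq 2\lceil c_j/2\rceil\geq c_j$ for $j\in J$, and the corner identified by exhibiting explicit gaps in $\operatorname{C}(\negc-\mathbf{1})\setminus\Omega_J(\negc)$ with each extremal coordinate $c_i-1$. The only cosmetic difference is that you route through Lemma~\ref{cornernabla} and split cases by whether $i\in J$, whereas the paper verifies Definition~\ref{corner} directly and splits by whether $J=[d]$ (using $\negc-\mathbf{1}$ as a single witness gap when $J\neq[d]$).
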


\begin{proof}
Let $\emptyset \neq J \in \mathcal{P}_d$ and $ A \subseteq \Omega_{J }(\negc)$. Note that $S$ is a GNS. In fact, $\mathbf{0} \in S$ and $\mathbb{N}_0^d \setminus S$ is finite, since $\mathcal{O}(\negc)$ is an ordinary GNS and $\mathcal{O}(\negc) \subseteq S$. Now, let $\negalpha, \negbeta \in S$. If $\negalpha$ or $\negbeta$ lies in $\mathcal{O}(\negc)$, then it is clear that $\negalpha + \negbeta \in S$. If $\negalpha , \negbeta \in A$, then $\negalpha + \negbeta = (\alpha_1 + \beta_1 , \ldots , \alpha_d + \beta_d) \in \mathcal{O}(\negc) \subseteq S$, since $\alpha_{j} + \beta_{j} \geq c_{j}$ for all $j \in J$. Hence, $A \cup \mathcal{O}(\negc)$ is a GNS. Let us prove that $S$ has corner $\negc$. If $i \in [d]$ and $\alpha \geq c_i$, then \linebreak $(\beta_1,\ldots,\beta_{i-1}, \alpha, \beta_{i+1},\ldots,\beta_d) \in \mathcal{O}(\negc) \subseteq S$, since $\negc = (c_1,\ldots,c_d)$ is the corner of $\mathcal{O}(\negc)$. Thus, the condition (1) of Definition \ref{corner} is verified. So, it remains to verify the part (2) of Definition \ref{corner}. Let  $\negalpha = (\alpha_1,\ldots , \alpha_d) \in A$. For $J=[d]$, we have $1 \leq \alpha_i \leq c_i - 1$ for all $i \in [d]$, which implies that $(c_i - 1)\nege_i \notin S$, for all $i \in [d]$. If $J\neq [d]$, then there is $\ell \in [d] \setminus J$ such that $ \alpha_\ell < c_\ell - 1$, and thus $\negc - \mathbf{1} \notin S$. Therefore, we conclude that $\negc$ is the corner of $S$.
\end{proof}

As a consequence, we obtain a lower bound for the number of GNSs in $\mathbb{N}_0^d$ with corner $\negc$ as follows.

\begin{theorem} \label{teo cota GNS}
	Let $d \geq 2$ and $\negc = (c_1,\ldots,c_d)\in \mathbb{N}^d$, with $c_i > 1$ for all $i$, and let $N(\negc)$ be the number of GNSs in $\mathbb{N}_0^d$ with corner $\negc$. Then 
	$$
	\displaystyle  N(\negc) \geq 1 + \sum_{J \in \mathcal{P}_d\setminus \{\emptyset\}} \big( 2^{n_{J}} - 1 \big),
	$$
	where $$n_J=n_J(\negc)=\prod_{j \in J}\textstyle\lfloor \frac{c_j}{2} \rfloor \displaystyle \prod_{t\in [d]\setminus J}\textstyle\lceil \frac{c_t}{2} \rceil.$$
\end{theorem}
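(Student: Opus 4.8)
The plan is to count the number of distinct GNSs produced by the construction in Proposition~\ref{cornerunionordinary} and show that these are all distinct, yielding the stated lower bound. First I would recall that for each nonempty $J \in \mathcal{P}_d$ and each subset $A \subseteq \Omega_J(\negc)$, Proposition~\ref{cornerunionordinary} guarantees that $S = A \cup \mathcal{O}(\negc)$ is a GNS with corner $\negc$. The key quantity is the cardinality of $\Omega_J(\negc)$: by its very definition, $\Omega_J(\negc)$ consists of the lattice points $\negx$ with $\lceil \frac{c_j}{2}\rceil \leq x_j \leq c_j - 1$ for $j \in J$ and $0 \leq x_i < \lceil \frac{c_i}{2}\rceil$ for $i \in [d]\setminus J$. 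Counting the integers in each coordinate range gives $c_j - 1 - \lceil \frac{c_j}{2}\rceil + 1 = c_j - \lceil \frac{c_j}{2}\rceil = \lfloor \frac{c_j}{2}\rfloor$ choices for $j \in J$, and $\lceil \frac{c_i}{2}\rceil$ choices for $i \in [d]\setminus J$. Hence $|\Omega_J(\negc)| = n_J$, so there are exactly $2^{n_J}$ subsets $A \subseteq \Omega_J(\negc)$.

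Next I would argue that distinct choices give distinct GNSs. For a fixed nonempty $J$, two distinct subsets $A \neq A'$ of $\Omega_J(\negc)$ yield $A \cup \mathcal{O}(\negc) \neq A' \cup \mathcal{O}(\negc)$: since $\Omega_J(\negc)$ is disjoint from $\mathcal{O}(\negc)$ (the points of $\Omega_J(\negc)$ lie in $\operatorname{C}(\negc - \mathbf{1})$, hence are gaps of $\mathcal{O}(\negc)$), the elements of $A$ and $A'$ inside the region $\operatorname{C}(\negc-\mathbf{1})$ are recovered exactly as $S \cap \Omega_J(\negc)$. To handle different index sets, I would invoke Remark~\ref{remark Omega_i}, which states that the sets $\Omega_J(\negc)$ are pairwise disjoint across all $J \in \mathcal{P}_d$. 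Thus for $J \neq J'$ and any $A \subseteq \Omega_J(\negc)$, $A' \subseteq \Omega_{J'}(\negc)$, the resulting GNSs can coincide only if both $A$ and $A'$ reduce to the empty set, i.e.\ only in the single common case $S = \mathcal{O}(\negc)$. This overlap is the one subtlety to address carefully.

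The counting then proceeds by inclusion. The empty subset $A = \emptyset$ always produces $\mathcal{O}(\negc)$ itself, regardless of $J$; this GNS should be counted once. For each nonempty $J$, the nonempty subsets $A \subseteq \Omega_J(\negc)$ number $2^{n_J} - 1$, and by the disjointness of the regions these nonempty-$A$ constructions give GNSs that are genuinely distinct both within a fixed $J$ and across different $J$ (since a nonempty $A \subseteq \Omega_J(\negc)$ forces $S$ to contain a point of $\Omega_J(\negc)$, pinning down $J$ uniquely). Summing, the total count of distinct GNSs arising this way is $1 + \sum_{J \in \mathcal{P}_d \setminus \{\emptyset\}} (2^{n_J} - 1)$, and since all of these lie in $\mathcal{C}(\negc)$ we obtain $N(\negc) \geq 1 + \sum_{J \in \mathcal{P}_d \setminus \{\emptyset\}} (2^{n_J} - 1)$.

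The main obstacle I anticipate is the bookkeeping that ensures no double counting: I must verify both that the $\mathbf{0}$-choice (giving $\mathcal{O}(\negc)$) is shared among all $J$ and hence counted exactly once via the isolated ``$1 +$'' term, and that every nonempty selection is attributed to a unique $J$. The disjointness assertion of Remark~\ref{remark Omega_i} does the heavy lifting here, so the proof reduces to the cardinality computation $|\Omega_J(\negc)| = n_J$ together with a clean injectivity argument for the map $(J, A) \mapsto A \cup \mathcal{O}(\negc)$ on nonempty $A$. No deeper structural fact about GNSs is needed beyond Proposition~\ref{cornerunionordinary}.
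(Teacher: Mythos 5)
Your proposal is correct and follows essentially the same route as the paper: count the $2^{n_J}-1$ nonempty subsets of each $\Omega_J(\negc)$ for nonempty $J$, use Proposition~\ref{cornerunionordinary} to see each yields a GNS with corner $\negc$, and use the pairwise disjointness of the regions from Remark~\ref{remark Omega_i} to avoid double counting, with $\mathcal{O}(\negc)$ contributing the isolated $1$. Your explicit injectivity discussion for the map $(J,A)\mapsto A\cup\mathcal{O}(\negc)$ is slightly more detailed than the paper's, but the argument is the same.
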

\begin{proof}
	For a fixed $J \in \mathcal{P}_d$, we have that 
	$$
	\displaystyle | \{ A \mbox{ : } \emptyset \neq A \subseteq \Omega_{J }(\negc)\}| = \big[  2^{\prod_{j \in J}(c_j - \lceil \frac{c_j}{2} \rceil) \prod_{t \notin J} \lceil \frac{c_t}{2} \rceil} \big] - 1.
	$$
	Note that, by Remark \ref{remark Omega_i}, if $A \neq \emptyset$, then the GNS of the form $A \cup \mathcal{O}(\negc)$ are all distinct. So, as $\mathcal{O}(\negc) \in \mathcal{C}(\negc)$, we can conclude that 
	$$
	\displaystyle | \mathcal{C}(\negc)| \geq 1 + \sum_{J \in \mathcal{P}_d} \big[  2^{\prod_{j \in J}(c_j - \lceil \frac{c_j}{2} \rceil) \prod_{t \notin J} \lceil \frac{c_t}{2} \rceil} - 1 \big].
	$$
	Since $a-\lceil a/2\rceil=\lfloor a/2\rfloor$ for any integer $a$, we obtain the stated formula.
\end{proof}

\begin{remark}
	Despite the idea behind the lower bound given in Theorem~\ref{teo cota GNS} is the same employed by Backelin \cite{backelin}, it is worth to notice why Theorem~\ref{teo cota GNS} cannot be applied to the case $d=1$. The reason is that for $d=1$ the set $\Omega_{\{1\}}$ becomes $\{n\in \mathbb{N} \ : \ \lceil\frac{c}{2}\rceil\leq  n \leq c-1\}$, and hence a subset $A\subseteq \Omega_{\{1\}}$ containing $c-1$ does not give a numerical semigroup $A\cup \mathcal{O}(c)$ with conductor $c$. On the other hand, for $d\geq 2$, the decomposition of $\operatorname{C}(\negc-\mathbf{1})$ into the disjoint regions $\Omega_{J }(\negc)$ takes into account that for each $i\in [d]$ there are gaps of the GNSs $A \cup \mathcal{O}(\negc)$ with at least one coordinate equal to $c_i-1$, for $A\subseteq \Omega_{J}(\negc)$ as in Proposition~\ref{cornerunionordinary}, no matter the choice of $J\in \mathcal{P}_d\setminus \{\emptyset\}$.
\end{remark}

\subsection{An upper bound on the number of GNSs with fixed corner}

	As in the previous subsection, let us consider for each $\negc\in \mathbb{N}^d\setminus \operatorname{C}(\mathbf{1})$ the decomposition of $\operatorname{C}(\negc-\mathbf{1})$ into $2^d$ subsets
\begin{center}
	$
	\Omega_{J}(\negc):= \big\{ \negx \in \mathbb{N}_0^d \ : \ \big\lceil \frac{c_{j}}{2} \big\rceil  \leq x_{j} \leq  c_{j} - 1 \mbox{ for } j \in J \mbox{ and } x_i < \big\lceil \frac{c_i}{2} \big\rceil \mbox{ for } i \in [d] \setminus J  \big\}
	$,
\end{center}
where $J\in \mathcal{P}_d$, the power set of $[d]$. We note that, for every $\negx\in \Omega_\emptyset(\negc)\setminus \{\mathbf{0}\}$, there exists $n\in \mathbb{N}$ such that 
$$n\negx \in \operatorname{C}(\negc-\mathbf{1}) \setminus \Omega_\emptyset(\negc).$$
The following result is about configurations of points in $\operatorname{C}(\negc-\mathbf{1})$ that do not provide GNSs.

\begin{lemma}
	Let $\negc = (c_1,\ldots,c_d) \in \mathbb{N}^d$,  with $c_j > 1$ for all $j \in [d]$. Then, for any nonempty subset $A \subseteq \Omega_{\emptyset}(\negc)$ with $A\neq \{\mathbf{0}\}$, there are $|\negc|-|\Omega_\emptyset(\negc)|-1$ subsets $B$ of $\operatorname{C}(\negc-\mathbf{1})\setminus \Omega_\emptyset(\negc)$ such that
	$$
	A \cup B \cup \mathcal{O}(\negc)
	$$
	is not a GNS in $\mathbb{N}_0^d$.
\end{lemma}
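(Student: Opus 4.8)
The plan is to prove the statement by exhibiting $|\negc|-|\Omega_\emptyset(\negc)|-1$ explicit bad subsets $B$, all of them singletons. Write $R:=\operatorname{C}(\negc-\mathbf{1})\setminus\Omega_\emptyset(\negc)$, so that $|R|=|\negc|-|\Omega_\emptyset(\negc)|$, since $|\operatorname{C}(\negc-\mathbf{1})|=\prod_{i}c_i=|\negc|$. Because $A$ is nonempty with $A\neq\{\mathbf{0}\}$, I would first fix an element $\negx\in A$ with $\negx\neq\mathbf{0}$; as $A\subseteq\Omega_\emptyset(\negc)$, this $\negx$ lies in $\Omega_\emptyset(\negc)\setminus\{\mathbf{0}\}$. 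The remark preceding the lemma then supplies an integer $n$ with $\negw:=n\negx\in\operatorname{C}(\negc-\mathbf{1})\setminus\Omega_\emptyset(\negc)=R$. This single element $\negw$ is the crux of the argument: it is a multiple of $\negx$, hence an element that every submonoid containing $\negx$ is forced to contain.

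For the construction I would let $\negz$ range over $R\setminus\{\negw\}$ and take $B=\{\negz\}$ for each such $\negz$, claiming that $S:=A\cup\{\negz\}\cup\mathcal{O}(\negc)$ fails to be a GNS. The argument is by contradiction: if $S$ were a GNS it would in particular be a submonoid, and since $\negx\in A\subseteq S$ we would obtain $\negw=n\negx\in S$. But $\negw$ lies in none of the three pieces of $S$. It is not in $A$, because $A\subseteq\Omega_\emptyset(\negc)$ is disjoint from $R\ni\negw$; it is not in $B=\{\negz\}$, because $\negz\neq\negw$; and it is not in $\mathcal{O}(\negc)$, because $\negw\in\operatorname{C}(\negc-\mathbf{1})$ with $\negw\neq\mathbf{0}$, whereas $\mathcal{O}(\negc)=\{\mathbf{0}\}\cup(\mathbb{N}_0^d\setminus\operatorname{C}(\negc-\mathbf{1}))$ meets $\operatorname{C}(\negc-\mathbf{1})$ only in $\mathbf{0}$. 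This contradiction shows that $S$ is not closed under addition, hence not a GNS.

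Finally I would record the count: the singletons $\{\negz\}$ with $\negz\in R\setminus\{\negw\}$ are pairwise distinct subsets of $R=\operatorname{C}(\negc-\mathbf{1})\setminus\Omega_\emptyset(\negc)$, and there are exactly $|R|-1=|\negc|-|\Omega_\emptyset(\negc)|-1$ of them, as required. The step that demands the most care is not any computation but the bookkeeping that isolates the one element to exclude from $R$: one must select a genuine multiple $\negw$ of $\negx$ that has already left $\Omega_\emptyset(\negc)$ (which is precisely what the preceding remark guarantees), so that removing the single index $\negw$ from $R$ — and no more — yields exactly the claimed number of bad singletons. I would note in passing that many further subsets $B$ also fail to produce GNSs; the point of the lemma is the clean count of this particular explicit family.
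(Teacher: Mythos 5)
Your argument is correct and rests on exactly the same mechanism as the paper's: pick $\negx\in A\setminus\{\mathbf{0}\}$, use the remark preceding the lemma to produce a multiple $\negw=n\negx\in \operatorname{C}(\negc-\mathbf{1})\setminus\Omega_\emptyset(\negc)$, and observe that any candidate set containing $\negx$ but omitting $\negw$ cannot be closed under addition (your verification that $\negw$ lies in none of the three pieces $A$, $B$, $\mathcal{O}(\negc)$ is clean and complete). The difference is in which family of bad sets is exhibited. You take only the $|\negc|-|\Omega_\emptyset(\negc)|-1$ singletons $\{\negz\}$ with $\negz\in R\setminus\{\negw\}$, which does match the count literally printed in the statement. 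The paper instead observes that \emph{every} subset $B\subseteq R$ with $\negw\notin B$ fails, of which there are $2^{|R|-1}=2^{|\negc|-|\Omega_\emptyset(\negc)|-1}$; the exponent-free count in the lemma's statement (and in the last line of the paper's own proof) is evidently a typo, since the subsequent upper-bound theorem explicitly invokes ``$2^{|\negc|-|\Omega_\emptyset(\negc)|-1}$ possibilities of subsets $B$ \ldots by the previous lemma.'' So while your proof is a valid proof of the statement as written, it establishes strictly less than what the paper's argument actually yields and what the later application requires; to support Theorem \ref{teo cota sup GNS} you would simply replace your singletons by arbitrary subsets of $R\setminus\{\negw\}$ (equivalently, all $B\subseteq R$ with $\negw\notin B$), the rest of your contradiction argument going through verbatim.
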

\begin{proof}
	If $\negx$ is an element in a nonempty $A \subseteq \Omega_{\emptyset}(\negc)$, then $n\negx \in \operatorname{C}(\negc-\mathbf{1}) \setminus \Omega_\emptyset(\negc)$ for some $n\in \mathbb{N}$. Hence, for any subset $B$ of $\operatorname{C}(\negc-\mathbf{1}) \setminus \Omega_\emptyset$ that does not contain $n\negx$, we have that the $A \cup B \cup \mathcal{O}(\negc)$ is not a GNS since it is not closed to addition. As $|\operatorname{C}(\negc-\mathbf{1}) \setminus \Omega_\emptyset(\negc)|=|\negc|-|\Omega_\emptyset(\negc)|$, there are exactly $|\negc|-|\Omega_\emptyset(\negc)|-1$ such subsets of $\operatorname{C}(\negc-\mathbf{1})\setminus \Omega_\emptyset(\negc)$.
\end{proof}

A consequence of the previous lemma is the following upper bound for $N(\negc)$.

\begin{theorem} \label{teo cota sup GNS}
	For $\negc = (c_1,\ldots,c_d)\in \mathbb{N}^d$, with  $c_i > 1$ for all $i=1,\ldots,d$, let $N(\negc)$ be the number of GNSs in $\mathbb{N}_0^d$ with corner $\negc$. Then 
	$$
	\displaystyle  N(\negc) \leq  2^{|\negc|-1}-(2^{|\Omega_\emptyset(\negc)|-1}-1)\cdot 2^{|\negc|-|\Omega_\emptyset(\negc)|-1}.
	$$
\end{theorem}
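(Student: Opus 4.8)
The plan is to count, or rather overcount, the GNSs with corner $\negc$ by viewing each one as a subset of the region $\operatorname{C}(\negc-\mathbf{1})$. Every GNS $S$ with corner $\negc$ is completely determined by the set $\operatorname{L}(S)=\{\negx\in S \ : \ \negx\leq \negc-\mathbf{1}\}$, which is a subset of $\operatorname{C}(\negc-\mathbf{1})$ that always contains $\mathbf{0}$ and never contains $\negc-\mathbf{1}$ (by Remark~\ref{gaps}, for a Frobenius GNS we would have $\negc-\mathbf{1}\in \operatorname{H}(S)$, and in fact $\negc-\mathbf{1}\notin S$ always since its presence would force a smaller corner). Thus $N(\negc)$ is at most the number of subsets of $\operatorname{C}(\negc-\mathbf{1})$ that contain $\mathbf{0}$ and exclude $\negc-\mathbf{1}$. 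Since $|\operatorname{C}(\negc-\mathbf{1})|=|\negc|$, fixing membership of these two distinguished points leaves $|\negc|-2$ free choices, giving the crude bound $2^{|\negc|-2}$. This is weaker than the claimed bound, so I need to subtract off a genuinely forbidden family.

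\textbf{Subtracting the forbidden configurations.} The refinement comes from the previous lemma: I would decompose each admissible subset according to its trace $A=S\cap \Omega_\emptyset(\negc)$ on the ``small'' corner region $\Omega_\emptyset(\negc)$ and its trace $B=S\cap(\operatorname{C}(\negc-\mathbf{1})\setminus \Omega_\emptyset(\negc))$ on the complement. The lemma says that whenever $A$ is nonempty and $A\neq\{\mathbf{0}\}$ (equivalently, $A$ contains some $\negx\neq \mathbf{0}$), there is a forced element $n\negx$ lying outside $\Omega_\emptyset(\negc)$ that $B$ must contain, and if it does not, then $A\cup B\cup \mathcal{O}(\negc)$ fails to be a monoid. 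I would count these failures: the number of subsets $A\subseteq \Omega_\emptyset(\negc)$ with $\mathbf{0}\in A$ and $A\neq\{\mathbf{0}\}$ is $2^{|\Omega_\emptyset(\negc)|-1}-1$ (the factor $-1$ for fixing $\mathbf{0}\in A$, the $-1$ for excluding $A=\{\mathbf{0}\}$), and for each such $A$ the lemma produces $|\negc|-|\Omega_\emptyset(\negc)|-1$ choices of $B$ (out of $2^{|\negc|-|\Omega_\emptyset(\negc)|}$) that are forbidden. Multiplying, the number of non-GNS configurations of this specific type is $(2^{|\Omega_\emptyset(\negc)|-1}-1)\cdot 2^{|\negc|-|\Omega_\emptyset(\negc)|-1}$, matching the quantity subtracted in the statement.

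\textbf{Assembling the bound.} I would then write $N(\negc)\leq 2^{|\negc|-1}$ minus the count of forbidden configurations. Here the exponent $|\negc|-1$ (rather than $|\negc|-2$ from the crude bound) is accounted for by how the lemma's configurations sit inside the full family: the lemma fixes $\mathbf{0}\in A\subseteq \Omega_\emptyset(\negc)$ and the forced point $n\negx\notin B$, so each forbidden pair $(A,B)$ gives rise to $2^{|\negc|-|\Omega_\emptyset(\negc)|-1}$ total admissible subsets when the remaining coordinates of $B$ vary freely, and these are exactly the configurations removed. The key is that the subtracted term genuinely enumerates subsets that are \emph{not} closed under addition, hence cannot be GNSs, so removing them from the ambient count of $2^{|\negc|-1}$ yields an honest upper bound on the GNSs with corner $\negc$.

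\textbf{The main obstacle.} The delicate point is reconciling the exponents: I must be careful that the ambient count is taken to be $2^{|\negc|-1}$ and that the forbidden set is a subfamily of it whose cardinality is computed \emph{within the same encoding}. The subtlety is that the lemma's failures are indexed by $(A,B)$ with a single forced exclusion, so one must verify that distinct forbidden $(A,B)$ give disjoint contributions and that the multiplicative count $(2^{|\Omega_\emptyset(\negc)|-1}-1)\cdot 2^{|\negc|-|\Omega_\emptyset(\negc)|-1}$ does not over- or under-count across different choices of the forced element $n\negx$; getting this bookkeeping exactly right, rather than the structural idea, is where the real care is needed.
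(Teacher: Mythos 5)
Your argument is essentially the paper's own proof: the same encoding of a GNS with corner $\negc$ as a subset of $\operatorname{C}(\negc-\mathbf{1})$ containing $\mathbf{0}$ (giving the ambient count $2^{|\negc|-1}$), the same split of that subset into its traces $A$ on $\Omega_\emptyset(\negc)$ and $B$ on the complement, and the same application of the preceding lemma to subtract the $(2^{|\Omega_\emptyset(\negc)|-1}-1)\cdot 2^{|\negc|-|\Omega_\emptyset(\negc)|-1}$ configurations that fail to be closed under addition; the disjointness bookkeeping you worry about at the end is exactly what makes the paper's subtraction legitimate, and it works because distinct pairs $(A,B)$ yield distinct subsets $A\cup B$. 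The only flaw is the aside in your first paragraph: it is not true that $\negc-\mathbf{1}\notin S$ for every $S$ with corner $\negc$ (by Theorem~\ref{lub}, $\negc-\mathbf{1}\in S$ precisely when $S$ is non-Frobenius), so the ``crude bound'' $2^{|\negc|-2}$ is not valid --- but since you discard it and base the final count on $2^{|\negc|-1}$, this does not affect the proof.
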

\begin{proof}
	In order to bound $N(\negc)$ we will count the configurations of points in $\operatorname{C}(\negc-\mathbf{1})$ which do not provide GNSs. To begin with, we note that there are $2^{|\Omega_\emptyset(\negc)|-1}-1$ possibilities of nonempty subsets $A$ in $\Omega_{\emptyset}(\negc)$, different of $\{\mathbf{0}\}$. From each one of these subsets $A$, there are $2^{|\negc|-|\Omega_\emptyset(\negc)|-1}$ possibilities of subsets $B$ in $\operatorname{C}(\negc-\mathbf{1})\setminus \Omega_\emptyset(\negc)$ such that $A\cup B\cup \mathcal{O}(\negc)$ is not a GNS, by the previous lemma. Hence, putting together these possibilities, there are at least $(2^{|\Omega_\emptyset(\negc)|-1}-1)\cdot 2^{|\negc|-|\Omega_\emptyset(\negc)|-1}$ configurations of points in $\operatorname{C}(\negc-\mathbf{1})\setminus \{\mathbf{0}\}$ that when joined to $\mathcal{O}(\negc)$ do not provide GNSs. Now, since $\operatorname{C}(\negc-\mathbf{1})\setminus\{\mathbf{0}\}$ has $|\negc|- 1$ elements, there are $2^{|\negc|-1}$ possibilities of sets containing $\mathbf{0}$ in $\operatorname{C}(\negc-\mathbf{1})$. From this amount, by using the lower bound on subsets of $\operatorname{C}(\negc-\mathbf{1})$ that do not give GNSs, we obtain that the number of GNSs with corner $\negc$ is upper bounded by $2^{|\negc|-1}-(2^{|\Omega_\emptyset(\negc
		)|-1}-1)\cdot 2^{|\negc|-|\Omega_\emptyset(\negc)|-1}$.
\end{proof}

Next, we present Table \ref{tabela} with the lower and upper bounds obtained in Theorems \ref{teo cota GNS} and \ref{teo cota sup GNS} and the exact values of $N(\negc)$, which has been computed using the Algorithm 1, implemented in GAP \cite{GAP} with the package \texttt{numericalsgps} \cite{numericalsgps}. Observe that every permutation in the coordinates of a given $\negc$ provides the same lower bound, upper bound and exact value of $N(\negc)$.

\begin{table}[]
\caption{Lower bound (LB), upper bound (UB) and exact values for $N(\negc)$}
\label{tabela}
	\begin{tabular}{||c|c|c|c||c|c|c|c||}
		\hline
		$\negc$ & LB & $N(\negc)$ & UB & $\negc$ & LB & $N(\negc)$ & UB \\
		\hline
		$(2,2)$ & 4  & 4 &	 8	& $(6,3)$ & 78 & 3,212 & 67584 \\
		$(3,2)$ & 6  & 10 & 24 & $(5,4)$ & 94 & 8,758 & 270336 \\ 
		$(3,3)$ & 8  & 38 & 144 & $(2,2,2)$ & 8 & 52 &	128 \\
		$(4,2)$ & 10 & 30 & 96 & $(3,2,2)$ & 14 & 388 & 1536 \\
		$(4,3)$ & 22 & 203 & 1,152 & $(4,2,2)$ & 22 & 2,903 & 24576 \\
		$(5,2)$ &  14  & 66 &	 320 & $(3,3,2)$ & 30 & 6,930 & 73,728 \\
		$(6,2)$ & 22 & 199 & 1,280 & $(4,3,2)$ & 58 & 136,277 & 4,718,592 \\
		$(5,3)$ & 26 & 669 & 8,448 & $(2,2,2,2)$ & 16 & 4,382 & 32,768 \\
		$(4,4)$ & 46 & 1,587 & 18,432 & $(3,2,2,2)$ & 30 & 222,734 &	 6,291,456 \\
%		$(7,2)$  &   & ? &			\\
%		$(6,3)$ &    & 3212 &		\\
%		$(5,4)$ &    & 8758 &	\\
%		$(6,4)$ & 190  & 62851	&	4325376 \\
		\hline
	\end{tabular}
\end{table}

\section{Concluding remarks}

In the preceding sections, we addressed the natural relations of the corner element with other invariants of a GNS, the problem of computing all GNSs with fixed corner, and  basic estimates on the number of such GNSs. As naturally occurs when an invariant is introduced, many questions arise. We list here some of them. What is the magnitude of the number of GNSs in $\mathbb{N}_0^d$ with a fixed corner? We saw that these GNSs can be divided into two classes: what can be said about the proportion of Frobenius and non-Frobenius GNSs with fixed corner? Apart from some known families of Frobenius GNS, which other families of GNSs could be described in terms of the corner element? In the spirit of the recent advances in numerical semigroups (see the surveys \cite{Delgado, K}), is there any approach involving the corner element for counting GNSs in $\mathbb{N}_0^d$ by genus or dealing with the generalized Wilf conjecture?

\end{document}